\title{Eigenvalues upper bounds for the magnetic Schr\"{o}dinger operator}
\author{Bruno Colbois, Ahmad El Soufi\footnote{Our colleague and friend Ahmad El Soufi passed away on December 29, 2016}, Sa\"{i}d Ilias and Alessandro Savo}
\date{\today}
\newtheorem{defi}{Definition} 
\newtheorem{thm}[defi]{Theorem}
 \newtheorem{prop}[defi]{Proposition}
\newtheorem{lemme}[defi]{Lemma}
\newtheorem{cor}[defi]{Corollary}
\newcommand{\twosystem}[2]{\left\{\begin{aligned} &#1\\ &#2\end{aligned}\right.}
\newcommand{\nero}{\smallskip$\bullet\quad$\rm}
\newcommand{\matrice}{\begin{pmatrix}}
\newcommand{\ok}{\end{pmatrix}}
\newcommand{\scal}[2]{\langle{#1},{#2}\rangle}
\newcommand{\abs}[1]{\lvert{#1}\rvert}
\newcommand{\norm}[1]{\lVert{#1}\rVert}
\newcommand{\reals}{{\bf R}}
\newcommand{\sphere}[1]{{\bf S}^{#1}}
\newcommand{\real}[1]{{\bf R}^{#1}}
\newcommand{\bd}{\partial}
\newcommand{\R}{\mathbb R}
\renewcommand{\l}{\lambda}
\begin{document}

\maketitle
\begin{abstract} 
We study the eigenvalues $\lambda_k(H_{A,q})$  of the magnetic Schr\"{o}dinger operator $H_{A,q}$ associated with a magnetic potential  $A$ and a scalar potential $q$, on  a compact Riemannian manifold $M$,  with Neumann boundary conditions if $\partial M\ne\emptyset$. We obtain various  bounds on $\lambda_k(H_{A,q})$. 
Besides the dimension and the volume of the manifold, the geometric quantity which plays an important role in these estimates is the first eigenvalue $\lambda_{1,1}''(M)$ of he Hodge-de Rham Laplacian acting on co-exact 1-forms. In the 2-dimensional case, $\lambda_{1,1}''(M)$ is nothing but the first positive eigenvalue of the Laplacian acting on functions.    As for the  dependence of the bounds on the potentials, it brings into play the mean value of the scalar potential $q$, the $L^2$-norm of the magnetic field $B=dA$, and the distance, taken in $L^2$, between the harmonic component of $A$ and the subspace  of all 
 closed $1$-forms whose cohomology class is integral (that is, having integral flux around any loop).  In particular, this distance is zero when the first cohomology group $H^1(M,\reals)$ is trivial. 

\medskip

\noindent \it 2000 Mathematics Subject Classification. \rm 58J50, 35P15.

\noindent\it Key words and phrases. \rm Schr\"odinger operator, Magnetic Laplacian, Eigenvalues, Upper bounds

\end{abstract}
\large

\section{Introduction} \label{intro} Let $(M,g)$ be a compact Riemannian manifold  with smooth boundary $\bd M$,  if non empty.
Consider the trivial complex line bundle $M\times\bf C$ over $M$; its space of sections  can be identified with $C^{\infty}(M,\bf C)$, the space of smooth complex valued functions on $M$.  Given a smooth real 1-form $A$ on $M$ we define a connection $\nabla^A$ on $C^{\infty}(M,\bf C)$ as follows:
\begin{equation} \label{connection}
\nabla^A_Xu=\nabla_Xu-iA(X)u
\end{equation}
for all vector fields $X$ on $M$ and for all $u\in C^{\infty}(M,\bf C)$ (here $\nabla$ denotes the Levi-Civita connection of $(M,g)$). The operator
\begin{equation} \label{magnetic laplacian}
\Delta_A=(\nabla^A)^{\star}\nabla ^A
\end{equation}
is called the {\it magnetic Laplacian} associated to the magnetic potential $A$, and the smooth two form 
$$
B=dA
$$
is the  associated {\it  magnetic field}. In this paper, we are interested in magnetic Schr\"{o}dinger operators of the form
$$
H_{A,q}=\Delta_A+q
$$
where $q$ is a real valued continuous function on $M$. If $A=0$, $\Delta_A$ is simply the usual Laplacian $\Delta$ on $M$. Note that we have

\begin{equation}\label{maglap}
\Delta_A u = \Delta u +2i \scal{A}{du} + \left(\vert A\vert ^2 +i\ \mbox{div} A\right) u
\end{equation}

where ${\rm div}A$ (often denoted also $\delta A$) is the co-differential of $A$.

\medskip
If the boundary of $M$ is non empty, we will consider Neumann magnetic conditions, that is:
\begin{equation}\label{mneumann}
\nabla^A_Nu=0\quad\text{on}\quad\bd M,
\end{equation}
where $N$ denotes the inner unit normal. 
Then, it is well-known that $H_{A,q}$ is self-adjoint, and admits a discrete spectrum
$$
\l_1(H_{A,q})\le \l_2(H_{A,q}) \le ... \to \infty.
$$

Estimates of eigenvalues of such operators 
have received a great attention in the  last decades, especially in the case where the underlying manifold is a bounded Euclidean domain with Dirichlet boundary conditions (see for instance \cite{AFNN, BH, Er1, NNT}) or with Neumann boundary conditions (see  \cite{BCC, CS, ELMP, Er2, FH2, HHHO, LLPP, Sh}).

\smallskip
In this paper, we first give \emph{upper bounds} for the spectrum of $H_{A,q}$ in terms of the harmonic part of the potential $A$, the magnetic field $B$, the integral $\int_{M} q$ and the geometry of $M$. These estimates are compatible with the Weyl law, and they are deduced from the fact that we have the relation (see (\ref{comparaison}) for a proof)
\begin{equation}
\lambda_k(H_{A,q})\le \lambda_k(H_{0,\vert A\vert^2+q}) =\lambda_k(\Delta+\vert A\vert^2+q)
\end{equation}
where $\vert A\vert$ denote the pointwise norm of $A$. We will also focus on the  two first eigenvalues $\lambda_1(H_{A,q})$ and $\lambda_2(H_{A,q})$, where we can get more precise results. 

\smallskip
 In Theorems \ref{firsteigenvalue}, \ref{volconf} and \ref{main1}, we observe that the geometry of the underling manifold $(M,g)$ appears through the first nonzero eigenvalue $\lambda_{1,1}''$ of the Hodge-De Rham Laplacian $\Delta_{HR}$ acting on coexact 1-forms (with absolute condition when $\bd\Omega$ is not empty). This lead us to collect in sections \ref{consequences} and \ref{second} a lot of already known results where we have an explicit control of $\lambda_{1,1}''(M,g)$. 

\smallskip
A last point, is that, when $A$ is closed  (i.e. zero magnetic field), we can establish a \emph{sharp} upper bound for the first eigenvalue $\lambda_1(H_{A,q})$ in term of the distance of $A$ to the an integral  lattice of harmonic $1$-forms (see Theorem \ref{harmonic potential}) and we can discuss the equality case (see Theorems \ref{harmonic potential} and \ref{genusone}). 

\smallskip
In the rest of the introduction we will recall some known facts and discuss the main results.


\subsection{Preliminary facts and notation} \label{preliminary} First, we recall  the absolute boundary conditions for a differential $p$-form $\omega$. A  form $\omega$ is said to be {\it tangential} if $i_NA=0$ on $\bd M$, where $N$ denote the exterior normal vector to the boundary;  then,  $\omega$ satisfies the \emph{absolute boundary conditions} if $\omega$ and $d\omega$ are both tangential. We denote $\lambda_{1,p}$ the first eigenvalue of the Hodge Laplacian on $p$-forms (with absolute boundary conditions if $\partial M$ non empty), and by $\lambda_{1,p}''$ (resp. $\lambda_{1,p}'$) the first eigenvalue when restricted to co-exact (resp. exact) $p$-forms. It follows that
$$
\lambda_{1,p}\leq \min\{\lambda_{1,p}',\lambda_{1,p}''\}
$$
and as $\lambda_{1,p}''=\lambda_{1,p+1}'$ (by differentiating eigenfunctions) we see 
$$
\lambda_{1,p}''\geq \max\{\lambda_{1,p},\lambda_{1,p+1}\}.
$$
In particular,
$$
\lambda_{1,1}''\geq \max\{\lambda_{1,1},\lambda_{1,2}\}.
$$

We recall now the variational definition of the spectrum. Let $M$ be a compact manifold. If the boundary is non empty, we assume for $u\in C^{\infty}(M,\bf C)$ the magnetic Neumann conditions, as in \eqref{mneumann}. Then one verifies that
$$
\int_{M}(H_{A,q}u)\bar uv_g=\int_{M}(\abs{\nabla^Au}^2+q\abs{u}^2)v_g,
$$
and the associated quadratic form is then
$$
Q_{A,q}(u)=\int_{M}(\abs{\nabla^Au}^2+q\abs{u}^2)v_g.
$$

We also introduce the Rayleigh quotient of a smooth function $u\not =0$, defined by
 \begin{equation}
R_{A,q}(u)= \frac{Q_{A,q}(u)}{\Vert u\Vert^2}
\end{equation}

The spectrum of $H_{A,q}$ admits the usual variational characterization:

\begin{equation}
\lambda_1(H_{A,q})= \min\Big\{ R_{A,q}(u)\ u\in C^{1}(M,\bf C) / \{0\}\Big\}
\end{equation}
and
\begin{equation}
\lambda_k(H_{A,q})= \min_{E_k} \max \Big\{ R_{A,q}(u):\ u\in E_k  / \{0\}\Big\}
\end{equation}
where $E_k$ runs through the set of all $k$-dimensional vector subspaces of $C^{1}(M,\bf C)$.

\medskip
The following proposition recalls some well-known facts.   If $c$ is a closed curve (a loop), the quantity
\begin{equation} \label{flux}
\Phi^A_c=\dfrac{1}{2\pi}\oint_{c}A
\end{equation}
is called the {\it flux} of $A$ across $c$. We will not  specify the orientation of the loop, so that the flux will only be defined up to sign. This will not affect any of the statements, definitions or results which we will prove in this paper.

\begin{prop}  \label{basic facts}
\begin{enumerate}
\item
The spectrum of $H_{A,q}$ is equal to the spectrum of $H_{A+d\phi,q}$ for all smooth real valued functions $\phi$; in particular, when $A$ is exact, the spectrum of $H_{A,q}$ reduces to that of the classical Schr\"{o}dinger operator with potential $q$ acting on functions (with Neumann boundary conditions if $\bd M$ is not empty).

\item 
 Let $A$ be $1-$form on $M$. Then, there exists a smooth real valued function $\phi$ on $M$ such that the $1$-form $\tilde A=A+d\phi$ is co-closed and tangential, that is:
\begin{equation} \label{boundary conditions}
\delta\tilde A=0, \,\, i_N\tilde A=0.
\end{equation}

\item 
 Set:
$$
{\rm Har}_1(M)=\Big\{h\in\Lambda^1(M): dh=\delta h=0 \,\,\text{on $M$},\,\, i_Nh=0\,\text{on}\,\, \partial M\Big\}.
$$
Assume that the $1$-form $A$ is co-closed and tangential. Then $A$ can be decomposed
\begin{equation} \label{decomposition}
A=\delta\psi+h,
\end{equation}
where $\psi$ is a smooth tangential $2$-form and $h\in{\rm Har}_1(M)$. Note that the vector space ${\rm Har}_1(M)$ is isomorphic to the first de Rham absolute cohomology space $H^1(M,\reals)$. 

\item
 We have $\lambda_1(H_{A,0})\doteq \lambda_1(\Delta_A)=0$ if and only $A$ is closed (i.e. $B=0$) and  the cohomology class of $A$ is an integer (that is $\Phi^A_c\in \bf Z $  for any loop $ c$ in $M$).

\end{enumerate}
\end{prop}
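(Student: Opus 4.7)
The proposition collects four standard facts, and I would dispatch each via gauge invariance and classical Hodge theory on manifolds with boundary. For part (1), the crucial observation is that multiplication by $e^{i\phi}$ is a unitary operator on $L^2(M,\C)$, and a direct calculation from \eqref{connection} gives $\nabla^{A+d\phi}(e^{i\phi}u) = e^{i\phi}\nabla^A u$. Hence this map intertwines $H_{A,q}$ with $H_{A+d\phi,q}$ and preserves the Neumann condition \eqref{mneumann}, so the two operators have the same spectrum; when $A = d\phi_0$ is globally exact, choosing $\phi = -\phi_0$ reduces $H_{A,q}$ to the ordinary Schr\"odinger operator $\Delta + q$.

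For part (2), writing $\tilde A = A + d\phi$ translates the desired conditions $\delta\tilde A=0$, $i_N\tilde A=0$ into the Neumann problem $\Delta\phi = -\delta A$ on $M$ with $\bd\phi/\bd N = -i_N A$ on $\bd M$, whose compatibility condition $\int_M \delta A = \int_{\bd M} i_N A$ follows from Stokes' theorem; a smooth solution then exists by standard elliptic theory. For part (3), I would appeal to the Hodge-Morrey decomposition for manifolds with boundary: co-closed tangential $1$-forms orthogonal to $\text{Har}_1(M)$ lie in the image of $\delta$ applied to tangential $2$-forms, and the identification $\text{Har}_1(M)\cong H^1(M,\reals)$ is the classical Hodge theorem with absolute boundary conditions.

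Part (4) is where the real content lies. For the ``if'' direction, assuming $A$ is closed with all fluxes in $\bf Z$, I would lift $A$ to the universal cover $\widetilde M$ where it admits a global smooth primitive $\phi$; the integer flux condition forces $\phi$ to change by integer multiples of $2\pi$ under deck transformations, so $u = e^{i\phi}$ descends to a smooth unit-modulus function on $M$. A direct computation gives $\nabla^A u = i e^{i\phi}(d\phi - A) = 0$, whence $Q_{A,0}(u)=0$ and the Neumann condition is automatic, so $\lambda_1(\Delta_A)=0$. Conversely, if $\lambda_1(\Delta_A)=0$ with eigenfunction $u$, then $\int_M|\nabla^A u|^2=0$ forces $\nabla^A u\equiv 0$, i.e.\ $du = iAu$. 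Because $A$ is real, $d(u\bar u) = iA|u|^2 - iA|u|^2 = 0$, so $|u|$ is a positive constant and $u$ vanishes nowhere; locally writing $u = |u|e^{i\phi}$ with $d\phi = A$ shows both that $A$ is closed and that the phase of $u$ along any loop must return to itself, yielding $\oint_c A \in 2\pi\bf Z$. The only genuinely non-trivial point is the bridge between the multivalued primitive of $A$ and the globally defined unit-modulus parallel section $e^{i\phi}$, which encodes the geometric meaning of the integer flux condition.
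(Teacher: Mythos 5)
Your proposal is correct, and for parts (1)--(3) it follows essentially the same route as the paper: the gauge identity $\nabla^{A+d\phi}(e^{i\phi}u)=e^{i\phi}\nabla^Au$ (the paper states the equivalent $\Delta_Ae^{-i\phi}=e^{-i\phi}\Delta_{A+d\phi}$), the Neumann problem $\Delta\phi=-\delta A$, $\partial\phi/\partial N=-A(N)$ for the Coulomb gauge, and the Hodge--Morrey decomposition with the vanishing of the exact component $df$ forced by $\delta A=0$ and $f|_{\partial M}=0$. The one genuine divergence is part (4): the paper does not prove it at all, but simply cites Shigekawa (closed case) and a reference for the Neumann case, whereas you supply a self-contained argument. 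Your argument is sound: the "if" direction via the multivalued primitive $\phi$ descending to a unit-modulus function $u=e^{i\phi}$ with $\nabla^Au=0$ is exactly the test-function construction the paper itself uses later in the proof of Theorem \ref{firsteigenvalue} (equation \eqref{path}), and the converse correctly extracts from $\nabla^Au\equiv 0$ that $\lvert u\rvert$ is a nonvanishing constant, that $A$ is locally exact (hence closed), and that single-valuedness of the phase forces $\oint_cA\in 2\pi{\bf Z}$, i.e.\ $\Phi^A_c\in{\bf Z}$. What your approach buys is a complete proof in place of a citation; what it costs is nothing, since the holonomy argument is short and is essentially Shigekawa's. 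One minor point worth making explicit if you write this up: in the "if" direction you should note that $\lambda_1(\Delta_A)\ge 0$ because the quadratic form $Q_{A,0}$ is nonnegative, so exhibiting $u$ with $Q_{A,0}(u)=0$ indeed pins the eigenvalue at zero rather than merely bounding it above.
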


{\rm Assertion (1) expresses the well-known {\it Gauge invariance} of the spectrum. Thanks to Assertion (2), in the study of the spectrum of the magnetic Laplacian, we can always assume that the potential $A$ is co-closed and tangential.}  

\begin{proof} 
\begin{enumerate}

\item 
 This comes from the fact that
\begin{equation}\label{gauge}
\Delta_A e^{-i\phi}=e^{-i\phi} \Delta_{A+d\phi}
\end{equation}
hence $\Delta_A$ and $\Delta_{A+d\phi}$ are unitarily equivalent.

\item 
Observe that the problem:
$$
\twosystem
{\Delta\phi=-\delta A \quad\text{on}\quad M,}
{\frac{\partial\phi}{\partial N}=-A(N)\quad\text{on}\quad\partial M}
$$
has a unique solution (modulo a multiplicative constant). It is immediate to verify that $\tilde A=A+d\phi$ is indeed co-closed and tangential.

\item
 We apply the Hodge decomposition to the $1$-form $A$ (see \cite{Sc}, Thm. 2.4.2), and get:
\begin{equation}
A=df + \delta \psi +h,
\end{equation}
where $f$ is a function which is zero on the boundary, $\psi$ is a tangential 2-form and $h$ is a 1-form satisfying $dh=\delta h=0$ (in particular, $h$ is harmonic). Now, as $\delta A=0$ we obtain $\delta df=0$ hence $f$ is a harmonic function; since $f$ is zero on the boundary, we get $f=0$ also on $M$ and we can write
\begin{equation}\label{coulomb}
A=\delta \psi +h.
\end{equation}
Now, since both $A$ and $\delta\psi$ are tangential, also $h$ will be tangential.

\item
This result was proved by Shigekawa \cite{Sh} for closed manifolds; for Neumann boundary conditions  see also \cite{HHHO}. 
\end{enumerate}
\end{proof}

\nero {\it In the sequel, when we write the decomposition $A=\delta \psi +h$, it will be implicitely supposed that $\psi$ is a tangential 2-form and $h$ is a 1-form satisfying $dh=\delta h=0$ and $i_Nh=0$.} 

\medskip

From definition \eqref{connection} we see
\begin{equation}\label{magenergy}
\vert \nabla^Au\vert^2 =\vert du\vert^2 + \vert A\vert^2 \vert u\vert^2 +2\mbox{Im}\scal{A}{\bar u du}.
\end{equation}

Since $A$ is real, it is clear that if $u\in C^\infty(M,\reals)$ is a real valued function, then $\mbox{Im}\scal{A}{\bar u du}=0$ and, then,
$$R_{A,q}(u)=\frac{\int_M\left(\vert du\vert^2 +( \vert A\vert^2 +q)\vert u\vert ^2\right) v_g}{\int_M\vert u\vert ^2 v_g}.$$
Since  $C^2(M,\reals)$ is a subspace of  $C^2(M,\bf C)$, it follows that  the eigenvalues of $H_{A,q} $ are dominated by those of the scalar Schr\"{o}dinger operator $H_{0,\vert A\vert^2+q} = \Delta +\vert A\vert^2+q $, that is
\begin{equation}\label{comparaison}
\lambda_k( H_{A,q} )\le \lambda_k( H_{0,\vert A\vert^2+q }) =  \lambda_k( \Delta + \vert A\vert^2+q ).
 \end{equation}   
 
     For the first eigenvalue of $H_{A,q} $, one also has a lower estimate by the first eigenvalue of the scalar Schr\"{o}dinger operator $H_{0,q} = \Delta+q$; in other words: 
     
\begin{equation}\label{lavineocarroll1}
\lambda_1( H_{A,q} )\ge \lambda_1( H_{0,q }).
\end{equation}

This property can be seen as an immediate consequence of the so-called diamagnetic inequality (see for instance Theorem 2.1.1 in \cite{FH1}). 


 \subsection{Statement of results}
 
Before stating the results, let us define a distance associated to the 1-form $A$ which will play an important role in our estimates (see \eqref{ldistance} below). Let 
${\cal L}_{\bf Z}$ be the lattice in ${\rm Har}_1(M)\sim H^1(M,\reals)$ formed by the integral harmonic $1$-forms (those having integral flux around any loop). 
Given $A\in {\rm Har}_1(M)$, we define its distance to the lattice ${\cal L}_{\bf Z}$ by the formula:
\begin{equation}\label{ldistance}
d(A,{\cal L}_{\bf Z})^2=\min \Big\{\norm{\omega-A}^2,\, \omega \in {\cal L}_{\bf Z}\Big\},
\end{equation}
where $\norm{\cdot}$ denotes the $L^2$-norm of forms in $M$. Of course, when $H^1(M,\reals)=0$ any harmonic $1$-forms is zero and we set $d(A,{\cal L}_{\bf Z})=0$.

 \begin{thm} \label{firsteigenvalue} Let $H_{A,q}$ be a magnetic Schr\"{o}dinger operator on a compact Riemannian manifold $(M,g)$ of dimension $n$, where $A=\delta\psi+h$ is a potential as in (\ref{decomposition}). One has, under Neumann boundary conditions if $\partial M \not = \emptyset$:

 \begin{enumerate}
\item
 \begin{equation} \label{generalestimate}
\l_1(H_{A,q})\leq \Gamma(M,A,q):=\frac{1}{\vert M\vert}\left( d(h,{\cal L}_{\bf Z})^2 +\frac{\Vert B\Vert^2}{\lambda_{1,1}''(M) } + \int_{M} q v_g \right)
\end{equation}
where $\vert M\vert$ denotes the volume of $M$ and $\lambda_{1,1}''(M)$ isthe first eigenvalue of the Hodge-De Rham Laplacian $\Delta_{HR}$ acting on co-exact $1$-forms (with absolute boundary condition if $\partial M \not=\emptyset$).

\item
If the first absolute De Rham cohomology group vanishes : $H^1(M,\reals)=0$, then 
\begin{equation} \label{nocohomology}
\l_1(H_{A,q})\leq \frac{1}{\vert M\vert}\left(\frac{\Vert B\Vert^2}{\lambda_{1,1}''(M) } + \int_{M} q v_g\right)
\end{equation}
with equality if and only if $\Delta_{HR} (\delta \psi)=\lambda_{1,1}''(\delta \psi)$ and $\vert \delta \psi\vert^2+q$ is constant, equal to $\lambda_1(H_{A,q})$. 

\end{enumerate}
 \end{thm}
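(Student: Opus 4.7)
The plan is to produce an explicit test function for the Rayleigh quotient $R_{A,q}$ whose value computes exactly the right-hand side of \eqref{generalestimate}, using the integer cohomology class that is closest to $h$ as the seed of the construction.

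More precisely, I would proceed as follows. Pick $\omega \in \mathcal{L}_{\bf Z}$ realizing $d(h,\mathcal{L}_{\bf Z})$, i.e.\ $\|\omega-h\|^2 = d(h,\mathcal{L}_{\bf Z})^2$. Since $\omega$ is closed with integer fluxes, the function $v := e^{i\varphi}$ with local primitive $d\varphi=\omega$ is globally well defined on $M$, takes values in the unit circle, and satisfies $\nabla v=i\omega v$. Substituting into the magnetic connection gives $\nabla^{A}v=-i(A-\omega)v$, hence $|\nabla^{A}v|^{2}=|A-\omega|^{2}$ and $|v|^{2}=1$, so
\begin{equation*}
R_{A,q}(v)=\frac{1}{|M|}\int_{M}\bigl(|A-\omega|^{2}+q\bigr)\,v_{g}.
\end{equation*}
Note that $v$ automatically satisfies the magnetic Neumann condition since both $A$ and $\omega$ are tangential, though this is not actually needed for the min-max.

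Next I would expand $|A-\omega|^{2}=|\delta\psi|^{2}+|h-\omega|^{2}+2\scal{\delta\psi}{h-\omega}$ using the Hodge decomposition $A=\delta\psi+h$. The cross term drops out after integration: the tangentiality of $\psi$ kills the boundary contribution in the integration by parts $\int_M\scal{\delta\psi}{h-\omega}=\int_M\scal{\psi}{d(h-\omega)}-\int_{\partial M}\scal{i_N\psi}{h-\omega}$, and $d(h-\omega)=0$ because $h,\omega$ are harmonic. By the choice of $\omega$, $\int_M|h-\omega|^2=d(h,\mathcal{L}_{\bf Z})^2$. It remains to estimate $\|\delta\psi\|^{2}$: since $\delta\psi$ is co-exact and satisfies $i_{N}(\delta\psi)=i_{N}(A-h)=0$, the variational characterization of $\lambda_{1,1}''(M)$ on co-exact $1$-forms (with absolute boundary conditions) gives
\begin{equation*}
\lambda_{1,1}''(M)\,\|\delta\psi\|^{2}\le\int_{M}\scal{\Delta_{HR}(\delta\psi)}{\delta\psi}\,v_g=\|d\delta\psi\|^{2}=\|B\|^{2},
\end{equation*}
since $B=dA=d\delta\psi$. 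Combining everything yields $\lambda_{1}(H_{A,q})\le R_{A,q}(v)\le\Gamma(M,A,q)$, which is \eqref{generalestimate}.

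For part (2), $H^{1}(M,\reals)=0$ forces $h=0$ and reduces the construction to the constant test function $v\equiv 1$, so the second term disappears. For the equality case, one simply tracks the two inequalities used. First, $\lambda_{1}(H_{A,q})=R_{A,q}(1)$ iff the constant function $1$ is an eigenfunction for $\lambda_{1}$; but $H_{A,q}\cdot 1=|A|^{2}+q-i\,\delta A=|\delta\psi|^{2}+q$ (using $\delta A=0$), so this requires $|\delta\psi|^{2}+q$ to be constant equal to $\lambda_{1}(H_{A,q})$. Second, equality in the Rayleigh inequality $\lambda_{1,1}''\|\delta\psi\|^{2}\le\|d\delta\psi\|^{2}$ forces $\delta\psi$ to be a $\lambda_{1,1}''$-eigenform of $\Delta_{HR}$. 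Conversely both conditions together make the chain of inequalities tight. The only step with any subtlety is the global definition of $v=e^{i\varphi}$, which is precisely why the integrality of the lattice $\mathcal{L}_{\bf Z}$ enters; everything else reduces to a careful Hodge-theoretic bookkeeping of boundary terms via the tangentiality of $\psi$ and $h$.
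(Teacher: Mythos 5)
Your proposal is correct and follows essentially the same route as the paper's proof: the unit-modulus test function $e^{i\varphi}$ attached to the nearest integral harmonic form $\omega\in{\cal L}_{\bf Z}$, the $L^2$-orthogonality of $\delta\psi$ and $h-\omega$, the variational bound $\lambda_{1,1}''(M)\,\norm{\delta\psi}^2\le\norm{B}^2$, and the same two-condition analysis of the equality case in part (2). The only cosmetic differences are that you fix a minimizing $\omega$ at the outset rather than taking the infimum over the lattice at the end, and you spell out the integration by parts underlying the Hodge orthogonality, which the paper simply invokes.
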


The case when the potential $A$ is closed (that is $B=0$) is of special interest. We have

\begin{thm}\label{harmonic potential}
Let $H_{A,q}$ be a magnetic Schr\"{o}dinger operator on a compact Riemannian manifold $(M,g)$ of dimension $n$, where the potential $A$ is closed, so that we can write $A=h$ as in (\ref{decomposition}). One has under Neumann boundary conditions if $\partial M \not = \emptyset$:

\begin{equation} \label{specificestimate}
\l_1(H_{A,q})\leq\frac{ d(h,{\cal L}_{\bf Z})^2+\int_{M}qv_g}{\vert M\vert}.
\end{equation}

In case of equality in (\ref{specificestimate}), there exists an integer harmonic form $\omega \in {\cal L}_{\bf Z}$ such that 
$\vert A-\omega\vert^2+q$ is constant. In particular, if the potential $q$ is constant, $(M,g)$ carries a harmonic $1$-form of constant length.

\end{thm}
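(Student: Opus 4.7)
\emph{Proof plan.} The strategy is to test the Rayleigh quotient of $H_{A,q}$ against parallel sections produced by the integer harmonic forms in $\mathcal{L}_{\bf Z}$, and then optimize over the lattice.

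For any $\omega\in\mathcal{L}_{\bf Z}$, Proposition \ref{basic facts}(4) applied to the potential $\omega$ gives $\lambda_1(\Delta_\omega)=0$, so there exists a smooth function $u_\omega:M\to\mathbb{C}$ with $\int_M|\nabla^\omega u_\omega|^2\,v_g=0$. This forces $\nabla^\omega u_\omega=0$ pointwise, that is $du_\omega=i\omega u_\omega$. Differentiating $u_\omega\bar u_\omega$ shows $|u_\omega|$ is constant, so we normalize so that $|u_\omega|\equiv 1$. Since $\nabla^A u_\omega=du_\omega-iAu_\omega=i(\omega-A)u_\omega$, plugging $u_\omega$ into the Rayleigh quotient yields
\begin{equation*}
\lambda_1(H_{A,q})\le R_{A,q}(u_\omega)=\frac{1}{|M|}\int_M\bigl(|A-\omega|^2+q\bigr)\,v_g=\frac{\|h-\omega\|^2+\int_M q\,v_g}{|M|}.
\end{equation*}
Taking the minimum over $\omega\in\mathcal{L}_{\bf Z}$, which is attained because $\mathcal{L}_{\bf Z}$ is a discrete lattice in the finite-dimensional space ${\rm Har}_1(M)$, gives the bound \eqref{specificestimate}.

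For the equality case, let $\omega^*\in\mathcal{L}_{\bf Z}$ achieve $\|h-\omega^*\|=d(h,\mathcal{L}_{\bf Z})$. Equality in \eqref{specificestimate} then forces $R_{A,q}(u_{\omega^*})=\lambda_1(H_{A,q})$, so by the variational characterization $u_{\omega^*}$ is a $\lambda_1$-eigenfunction. I plan to verify by direct computation with \eqref{maglap} that
\begin{equation*}
H_{A,q}u_{\omega^*}=\bigl(|A-\omega^*|^2+q\bigr)u_{\omega^*};
\end{equation*}
the ingredients are $\Delta_{\omega^*}u_{\omega^*}=0$, which with \eqref{maglap} and $\delta\omega^*=0$ yields $\Delta u_{\omega^*}=|\omega^*|^2 u_{\omega^*}$, and then a second application of \eqref{maglap} to $\Delta_A u_{\omega^*}$ using $\delta A=0$ which produces the announced identity. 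Since $u_{\omega^*}$ is nowhere zero, this forces $|A-\omega^*|^2+q$ to be constant on $M$. When $q$ itself is constant, the harmonic $1$-form $h-\omega^*$ then has constant pointwise length.

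The main technical point lies in the equality analysis: one must pick out the specific $\omega^*$ realizing the distance, check that the algebraic cancellations go through, and confirm the magnetic Neumann condition $\nabla^A_N u_{\omega^*}=0$ on $\partial M$. The latter follows from $i_N(A-\omega^*)=0$, since by hypothesis both $A=h$ and $\omega^*$ lie in ${\rm Har}_1(M)$ and are therefore tangential. Everything else reduces to the two applications of \eqref{maglap} sketched above, so the argument should be essentially mechanical once the test functions $u_\omega$ have been introduced.
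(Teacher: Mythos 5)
Your proposal is correct and follows essentially the same route as the paper: test against a unit-modulus function $u_\omega$ with $du_\omega=i\omega u_\omega$ for $\omega\in{\cal L}_{\bf Z}$ (the paper constructs it explicitly as $e^{i\int\omega}$, you obtain it abstractly from Proposition \ref{basic facts}(4), which is the same object), compute $H_{A,q}u_\omega=(\vert A-\omega\vert^2+q)u_\omega$ via \eqref{maglap} with $\delta A=\delta\omega=0$, minimize over the lattice, and in the equality case conclude that $u_{\omega^*}$ is a first eigenfunction so that $\vert A-\omega^*\vert^2+q$ is constant. The verifications you flag (attainment of the minimum, the Neumann condition via $i_N(A-\omega^*)=0$) all go through.
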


Sometimes we can characterize equality. Precisely:

\begin{thm}\label{genusone}
\begin{enumerate}
\item
When $(M,g)$ is a flat torus, we have equality in (\ref{specificestimate}) if and only if the potential $q$ is constant.
\item
When  $M$ is a two-dimensional torus (that is, a genus one surface) and $q$ is constant
with have equality in (\ref{specificestimate})  if and only if $(M,g)$ is a flat torus.
\end{enumerate}
\end{thm}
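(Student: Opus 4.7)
For both parts the key input is Theorem \ref{harmonic potential}: equality in (\ref{specificestimate}) forces the existence of $\omega\in{\cal L}_{\bf Z}$ for which $|A-\omega|^2+q$ is pointwise constant. Combined with the classical fact that harmonic $1$-forms on a flat torus are parallel (hence of constant pointwise norm), this does most of the work for part (1).

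\emph{Plan for part (1).} For $(\Leftarrow)$, I would diagonalize $H_{A,q}$ on the flat torus $M={\bf R}^n/\Lambda$ by Fourier series. After gauge reduction one may take $A$ parallel (constant), and applying (\ref{maglap}) to $u_\xi=e^{i\xi\cdot x}$ shows that $u_\xi$ is an eigenfunction of $H_{A,q}$ with eigenvalue $|A-\xi|^2+q$ for every $\xi$ in the dual lattice $\Lambda^*$. The flux condition $\frac{1}{2\pi}\oint_c\omega\in{\bf Z}$ translates to $\omega\cdot\gamma\in 2\pi{\bf Z}$ for every $\gamma\in\Lambda$, so ${\cal L}_{\bf Z}$ is identified with $\Lambda^*$. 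Therefore $\lambda_1(H_{A,q})=\min_{\xi\in\Lambda^*}(|A-\xi|^2+q)=d(A,{\cal L}_{\bf Z})^2/|M|+q$, which matches the right-hand side of (\ref{specificestimate}) since $A-\omega^*$ is parallel and hence $d(A,{\cal L}_{\bf Z})^2=|A-\omega^*|^2\,|M|$. For $(\Rightarrow)$, Theorem \ref{harmonic potential} provides $\omega$ with $|A-\omega|^2+q$ constant; since $A-\omega$ is harmonic on the flat torus it is parallel, so $|A-\omega|^2$ is constant, and thus $q$ must be constant.

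\emph{Plan for part (2).} The direction $(\Leftarrow)$ is immediate from part (1) specialized to $n=2$. For $(\Rightarrow)$, Theorem \ref{harmonic potential} yields $\omega\in{\cal L}_{\bf Z}$ such that $\eta:=A-\omega$ is a harmonic $1$-form with $|\eta|^2+q$ constant, hence with $|\eta|$ constant (since $q$ is). Excluding the degenerate case $\eta\equiv 0$---which occurs exactly when $A\in{\cal L}_{\bf Z}$ and in which equality gives no geometric information about $(M,g)$---set $c:=|\eta|>0$. In dimension two the Hodge dual $*\eta$ is again harmonic (because $d*\eta=-*\delta\eta=0$ and $\delta*\eta=*d\eta=0$), of the same pointwise norm $c$, and pointwise orthogonal to $\eta$. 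Therefore $e^1:=\eta/c$ and $e^2:=*\eta/c$ form a global orthonormal coframe on $M$ consisting of closed $1$-forms. Locally writing $e^j=du^j$, the metric becomes $g=(du^1)^2+(du^2)^2$, so the Gaussian curvature of $(M,g)$ vanishes identically and $M$ is a flat torus.

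\emph{Main obstacle.} The Fourier computation in part (1) is routine once one identifies ${\cal L}_{\bf Z}$ with the dual lattice $\Lambda^*$. The genuine substance lies in the rigidity step in part (2): a single nonzero harmonic $1$-form of constant length on a genus-one surface must force flatness. The Hodge-star trick above handles this crisply but uses essentially that the manifold is two-dimensional (so that the star of a harmonic form is again harmonic). The degenerate case $\eta\equiv 0$ is a real subtlety and is the reason the statement has to be read as pertaining to the nontrivial situation $A\notin{\cal L}_{\bf Z}$.
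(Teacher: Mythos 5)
Your proof is correct. Part (1) follows essentially the paper's own route: diagonalize $\Delta_A$ on the flat torus by the characters $e^{i\scal{\omega^{\sharp}}{x}}$, identify ${\cal L}_{\bf Z}$ with the (rescaled) dual lattice $2\pi\Gamma^{\star}$ --- note the factor $2\pi$ you dropped when writing ``identified with $\Lambda^*$'', though this is cosmetic --- and read off that equality holds exactly when $q$ equals its mean value. Part (2) is where you genuinely diverge. The paper applies Bochner's identity to the constant-length harmonic form $\xi$, integrates, invokes Gauss--Bonnet ($\int_M K\,v_g=0$ in genus one) to kill the curvature term and conclude $\nabla\xi=0$, and only then uses the parallel coframe $(\xi,\star\xi)$ to deduce flatness. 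You bypass Bochner entirely: $\eta$ and $\star\eta$ are both closed (and co-closed), pointwise orthogonal, and of the same constant norm, so after normalization they form a global orthonormal coframe of closed $1$-forms, which yields local flat coordinates directly. This is more elementary and makes clear that the genus-one hypothesis is not needed for the rigidity step (it is recovered a posteriori from Gauss--Bonnet). Your explicit handling of the degenerate case $\eta\equiv 0$ is also a genuine improvement over the paper: if $A\in{\cal L}_{\bf Z}$ (e.g.\ $A=0$) and $q$ is constant, then $H_{A,q}$ is gauge-equivalent to $\Delta+q$ and equality in (\ref{specificestimate}) holds on \emph{every} genus-one surface, flat or not; the paper's proof, and indeed the literal statement of part (2), tacitly assume $A\notin{\cal L}_{\bf Z}$, and your reading of the statement as pertaining only to that nondegenerate situation is the correct one.
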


In section \ref{first} we will give applications of Theorem \ref{firsteigenvalue} for manifolds  for which we have a good control of $\lambda_{1,1}''(M,g)$. First of all, using the Bochner formula,  we show such control for closed manifolds with Ricci curvature bounded below by a positive constant; when the boundary is not empty, we have to impose that it is convex. Then, we extend such lower bound also when the inner curvature is not everywhere positive; for example, for convex domains in $\real n$, and for hypersurfaces of manifolds with curvature operator with arbitrary sign, provided that the extrinsic curvatures are large enough. The general principle is that one still has a positive lower bound for $\lambda_{1,1}''$ if the positivity of the principal curvatures of the boundary compensate, in some sense, for the negativity of the inner curvature. \rm

\medskip
Thanks to the Li-Yau \emph{conformal volume} $V_c(M)$ associated to the Riemannian manifold $(M,g)$,  which depends only on the conformal class of $g$ (see \cite{EI1} for a definition
and detail about it), and the results in \cite{EI2}, it is also possible to get an upper bound for the second eigenvalue.

\begin{thm} \label{volconf} Let $H_{A,q}$ be a magnetic Schr\"{o}dinger operator on a compact Riemannian manifold $(M,g)$ of dimension $n$, where $A=\delta\psi+h$ is a potential as in (\ref{decomposition}). One has (under Neumann boundary conditions if $\partial M \not = \emptyset$) :

\begin{equation} \label{confvol_ineq}
\lambda_2(H_{A,q})\le n\frac{V_c(M)}{\vert M\vert}+\Gamma (M,A,q)
\end{equation}
with $\Gamma(M,A,q)$ as in (\ref{generalestimate}).
\end{thm}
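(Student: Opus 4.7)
The strategy is to combine the gauge invariance of Proposition \ref{basic facts}(1), extended to shifts by integral harmonic forms $\omega\in{\cal L}_{\bf Z}$ (for which $e^{if}$ with $df=\omega$ is globally defined thanks to the integer fluxes), the scalar comparison (\ref{comparaison}), and the Schr\"odinger-operator version of the Li--Yau--El Soufi--Ilias conformal-volume bound from \cite{EI2}. That bound states that for any continuous potential $V$ on a compact Riemannian $n$-manifold $(M,g)$, under Neumann conditions when $\bd M\neq\emptyset$,
$$
\lambda_2(\Delta+V)\leq n\frac{V_c(M)}{\vert M\vert}+\frac{1}{\vert M\vert}\int_M V\,v_g,
$$
and is proved by the standard recipe: one picks the coordinates of a conformal immersion $\phi:M\to\sphere N$, composes with a M\"obius transformation of $\sphere N$ so that each component is $L^2$-orthogonal to the (positive) first eigenfunction of $\Delta+V$, and then uses $\sum_i\phi_i^2\equiv 1$ together with the conformal-volume bound on $\sum_i\int\vert d\phi_i\vert^2$.

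Armed with this, I would first fix $\omega\in{\cal L}_{\bf Z}$ realizing $\Vert h-\omega\Vert^2=d(h,{\cal L}_{\bf Z})^2$, and then exploit that multiplication by $e^{if}$ intertwines $H_{A,q}$ with $H_{\tilde A,q}$, where $\tilde A:=A-\omega=\delta\psi+(h-\omega)$. Hence $\lambda_2(H_{A,q})=\lambda_2(H_{\tilde A,q})$, and the harmonic component of $\tilde A$ now has the minimal possible $L^2$-norm.

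Next I would apply (\ref{comparaison}) to $\tilde A$ together with the conformal-volume bound for the scalar Schr\"odinger operator $\Delta+\vert\tilde A\vert^2+q$, yielding
$$
\lambda_2(H_{A,q})\leq\lambda_2(\Delta+\vert\tilde A\vert^2+q)\leq n\frac{V_c(M)}{\vert M\vert}+\frac{1}{\vert M\vert}\int_M\bigl(\vert\tilde A\vert^2+q\bigr)v_g.
$$
To conclude, I would use the $L^2$-orthogonality of the coexact form $\delta\psi$ and the harmonic tangential form $h-\omega$ to obtain $\int_M\vert\tilde A\vert^2=\Vert\delta\psi\Vert^2+d(h,{\cal L}_{\bf Z})^2$, then apply the identity $B=dA=d\delta\psi$ (since $dh=0$) together with the variational inequality $\Vert\delta\psi\Vert^2\leq\Vert d\delta\psi\Vert^2/\lambda_{1,1}''(M)$ valid for coexact $1$-forms (with absolute boundary conditions). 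The mean of $\vert\tilde A\vert^2+q$ is then at most $\Gamma(M,A,q)$, which gives the claimed estimate.

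The only technically nontrivial step is to confirm that the conformal-volume bound of \cite{EI2} remains valid under Neumann boundary conditions; but this is a routine adaptation, since neither the Hersch barycenter trick nor the test-function construction imposes any boundary condition on the conformal immersion $\phi$.
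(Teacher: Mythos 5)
Your proof is correct and follows essentially the same route as the paper: reduce to the scalar Schr\"odinger operator via \eqref{comparaison} and invoke the El Soufi--Ilias conformal-volume bound of \cite{EI2}, then estimate the mean of the resulting potential by $\Gamma(M,A,q)$. Your explicit preliminary shift of $A$ by the nearest integral harmonic form $\omega\in{\cal L}_{\bf Z}$, via the unitary equivalence $H_{A,q}\sim H_{A-\omega,q}$, is in fact a welcome clarification: the paper's bare assertion that $\frac{1}{\vert M\vert}\int_M(\vert\delta\psi+h\vert^2+q)v_g\le\Gamma(M,A,q)$ only holds after this replacement, since $\Vert h\Vert^2$ alone can exceed $d(h,{\cal L}_{\bf Z})^2$.
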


In section \ref{second}, we will give applications of Theorem \ref{volconf} in specific situations.

\medskip
We then state an upper bound valid for all the eigenvalues.

\begin{thm} \label{main1} Let $H_{A,q}$ be a magnetic Schr\"{o}dinger operator on a closed Riemannian manifold $(M,g)$ of dimension $n$, where $A=\delta\psi+h$ is a potential as in (\ref{decomposition}).  
\begin{enumerate}
\item
There exists a constant $c([g])$ depending on the conformal class of $g$ such that

\begin{equation} \label{conformal}
\lambda_k (H_{A,q})\le \Gamma(M,A,q)+c([g])\left(\frac{k}{\vert M\vert}\right)^{2/n}.
\end{equation}

\item
If $(M^n,g)$ has a Ricci curvature bound
$
{\rm Ric}(M,g)\ge -a^2(n-1)
$ 
and if $\vert A\vert^2+q \ge 0$ (in particular, if $q\ge 0$),
there exist positive constants $c_1,c_2,c_3$ depending only on the dimension $n$ of $M$ such that
\begin{equation} \label{Ricci}
\lambda_k (H_{A,q}))\le c_1\Gamma(M,A,q)+c_2a^2+c_3\left(\frac{k}{\vert M\vert}\right)^{2/n},
\end{equation}
with $\Gamma(M,A,q)$ as in (\ref{generalestimate}).

\end{enumerate}
\end{thm}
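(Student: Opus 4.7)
The plan is to combine Theorem \ref{firsteigenvalue} with a packing/capacitor construction adapted to the curvature hypothesis at hand, so as to lift the $\lambda_1$-bound to every $\lambda_k$ at the cost of an additional Weyl-type term.

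\textbf{Part (1).} Let $u_1\in C^\infty(M,\C)$ be a first eigenfunction of $H_{A,q}$, for which $\lambda_1(H_{A,q})\le\Gamma(M,A,q)$ by Theorem \ref{firsteigenvalue}. I would test against functions of the form $\phi u_1$ with $\phi\in C^\infty(M,\reals)$. Expanding $\nabla^A(\phi u_1)=(\nabla\phi)u_1+\phi\nabla^A u_1$, using the identity $\mathrm{Re}(\overline{u_1}\nabla^A u_1)=\tfrac12\nabla|u_1|^2$ to handle the cross term, and integrating the resulting $\int\phi\nabla\phi\cdot\nabla|u_1|^2$ by parts against $H_{A,q}u_1=\lambda_1(H_{A,q})u_1$ (the boundary integral vanishes by the magnetic Neumann condition \eqref{mneumann}), one obtains the key identity
\begin{equation*}
Q_{A,q}(\phi u_1)=\lambda_1(H_{A,q})\int_M\phi^2|u_1|^2\,v_g+\int_M|\nabla\phi|^2|u_1|^2\,v_g.
\end{equation*}
If $\phi_1,\ldots,\phi_k\in C^\infty(M,\reals)$ have pairwise disjoint supports, the family $\{\phi_iu_1\}$ is orthogonal in both $L^2(M,\C)$ and with respect to $Q_{A,q}$, so by min-max
\begin{equation*}
\lambda_k(H_{A,q})\le\Gamma(M,A,q)+\max_{1\le i\le k}\frac{\int_M|\nabla\phi_i|^2|u_1|^2\,v_g}{\int_M\phi_i^2|u_1|^2\,v_g}.
\end{equation*}
I would then invoke the conformal packing lemma of Hassannezhad, in the spirit of Korevaar and Grigor'yan--Netrusov--Yau: for any probability measure $\mu$ on $(M,[g])$ and any $k$, such $\phi_i$'s can be constructed with every weighted Rayleigh quotient bounded by $c([g])(k/|M|)^{2/n}$. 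Applying this with $d\mu=|u_1|^2v_g/\|u_1\|^2$ gives \eqref{conformal}.

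\textbf{Part (2).} Here the sign hypothesis allows a scalar reduction. By \eqref{comparaison}, $\lambda_k(H_{A,q})\le\lambda_k(\Delta+V)$ with $V:=|A|^2+q\ge0$. Gauge invariance extended to integer harmonic forms (for $\omega\in{\cal L}_{\bf Z}$, the locally defined $\phi$ with $d\phi=\omega$ has integer periods, so $e^{i\phi}$ is globally single-valued and implements a unitary equivalence $H_{A,q}\simeq H_{A-\omega,q}$) allows the replacement of the harmonic part $h$ by $h-\omega$; choosing $\omega$ realizing the minimum in \eqref{ldistance} and using Hodge orthogonality together with $\|\delta\psi\|^2\le\|B\|^2/\lambda_{1,1}''(M)$ gives
\begin{equation*}
\frac1{|M|}\int_M V\,v_g\le\Gamma(M,A,q).
\end{equation*}
One then applies the scalar packing theorem valid on manifolds with $\mathrm{Ric}\ge-a^2(n-1)$ (Buser/Gromov): produce $2k$ bumps of controlled Dirichlet energy on disjoint balls of volume $\sim|M|/k$, and retain the $k$ bumps whose $V$-mass does not exceed $\int V/k$. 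This last pigeonhole step is where $V\ge0$ is essential, since otherwise negative contributions on the discarded bumps would prevent controlling the retained ones. The resulting Rayleigh quotients are bounded by $c_1\bar V+c_2a^2+c_3(k/|M|)^{2/n}$, yielding \eqref{Ricci}.

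The main technical hurdle in each part is the packing/capacitor lemma itself: producing $k$ disjointly supported bumps of controlled (weighted) Dirichlet energy, uniformly in the weight or the potential. The conformal lemma does so through the conformal volume $V_c([g])$; the Ricci version uses volume doubling. Both are standard by now, but constitute the real geometric content of the estimate.
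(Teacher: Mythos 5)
Your Part (1) contains a genuine gap at its final and crucial step. The ground-state substitution identity $Q_{A,q}(\phi u_1)=\lambda_1(H_{A,q})\int_M\phi^2|u_1|^2v_g+\int_M|\nabla\phi|^2|u_1|^2v_g$ is correct, and so are the $L^2$- and $Q$-orthogonality of the $\phi_iu_1$ and the min-max step. But the packing lemma you then invoke is not the one that exists. The Korevaar / Grigor'yan--Netrusov--Yau / Hassannezhad construction produces, for an arbitrary Radon measure $\mu$, disjointly supported functions with control on the quotient $\int_M|\nabla\phi_i|^2v_g\,/\int_M\phi_i^2\,d\mu$: the numerator is the \emph{unweighted} Dirichlet energy, because the capacity estimate on the annuli comes from the background conformal structure, not from $\mu$. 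What your argument requires is a bound on the \emph{fully weighted} quotient $\int_M|\nabla\phi_i|^2\,d\mu\,/\int_M\phi_i^2\,d\mu$ with $d\mu=|u_1|^2v_g$, i.e.\ an upper bound for the eigenvalues of the Laplacian with density $|u_1|^2$ depending only on $[g]$, $|M|$ and $k$. No such bound holds uniformly in the density: if $\mu$ is (a smoothing of) the volume measure restricted to a ball of radius $\epsilon$, any two disjointly supported functions both charged by $\mu$ force a transition inside that ball, and the second weighted eigenvalue blows up like $\epsilon^{-2}$. So you would have to exploit specific properties of the ground state $|u_1|^2$ (Harnack-type lower bounds), which would contaminate $c([g])$ with the potentials. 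The paper sidesteps all of this: by \eqref{comparaison} (restriction to real test functions), $\lambda_k(H_{A,q})\le\lambda_k(\Delta+|A|^2+q)$, and the cited theorem of Grigor'yan--Nadirashvili--Sire bounds $\lambda_k(\Delta+W)$ by $\frac1{|M|}\int_MW\,v_g+c([g])(k/|M|)^{2/n}$ directly; there the potential is absorbed by a pigeonhole over the disjoint supports while the numerator stays unweighted, which is exactly what the conformal packing lemma can deliver.

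Your Part (2) is essentially sound and is, in substance, a sketch of the proof of Hassannezhad's theorem, which the paper simply quotes as a black box after the same reduction $\lambda_k(H_{A,q})\le\lambda_k(\Delta+V)$; your explicit gauge transformation by the minimizing $\omega\in{\cal L}_{\bf Z}$ to obtain $\frac1{|M|}\int_MV\,v_g\le\Gamma(M,A,q)$ is a step the paper leaves implicit (without it, $\int_M|A|^2v_g$ contains $\|h\|^2$ rather than $d(h,{\cal L}_{\bf Z})^2$), and your observation that the pigeonhole on the $V$-mass of the discarded bumps is where $V\ge0$ enters is accurate. One small caveat you share with the paper: after replacing $h$ by $h-\omega$ the potential becomes $|A-\omega|^2+q$, and the sign hypothesis should be understood for that gauge representative. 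To repair Part (1) you should abandon the ground-state substitution and run the same scalar reduction there as well.
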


These result will be deduced from inequality (\ref{comparaison}) and from estimates for the Schr\"{o}dinger Laplacian derived in \cite{Ha} and \cite{GNS}. Note that if $A=0$, we recover the result of \cite{CM} for the usual Laplacian.

\medskip
In the specific situation of an Euclidean domain, we get other estimates in Theorem \ref{lambdak} using Riesz means, as a corollary of Inequality (\ref{comparaison}) and of \cite{EHIS}.


\section{Upper bounds for the first eigenvalue of $H_{A,q}$} \label{first}

\subsection{Proof of Theorem \ref{firsteigenvalue}.}
 We recall that $A=\delta\psi+h$ denotes the potential, $\psi$ is a smooth tangential $2$-form, $h\in{\rm Har}_1(\Omega)$,  $\lambda_{1,1}''(\Omega)$ denotes the first eigenvalue of the Laplacian acting on co-exact $1$-forms,  $B=dA$ is the curvature of the potential $A$, and ${\cal L}_{\bf Z}$ denotes the integral lattice of $H^1(M)$ formed by the integer harmonic $1$-forms ${\rm Har}_1(M)$.

 \medskip
Let  $ \omega \in {\cal L}_{\bf Z}$.  
  Fix a base point $x_0$ and define, for $x\in\Omega$:
\begin{equation}\label{path}
\phi(x)\doteq\int_{x_0}^x\omega,
\end{equation}
where on the right we mean integration of $\omega$ along any path joining $x_0$ with $x$. As $\omega$ is closed, $\phi(x)$ does not depend on the choice of two homotopic paths and since the flux of $A$ across each $c_j$ is an integer, $\phi(x)$ is multivalued and defined up to $2\pi \bf Z$. This implies that the function $u(x)=e^{i\phi(x)}$ is well defined. 
As 
$
d\phi=\omega
$
we see that
$
du=iu\omega
$
and therefore
$$
\nabla^Au=du-iuh-iu\delta \psi=iu(\omega-h-\delta \psi).
$$
Since $\abs{u}=1$, we obtain:
$$
\abs{\nabla^Au}^2=\abs{\omega-h-\delta \psi}^2.
$$
We use $u(x)$ as test-function for the first eigenvalue of $\Delta_A$. Then, for each $\omega\in {\cal L}_{\bf Z}$, we have the relation

\begin{equation} 
\lambda_1(H_{A,q})\leq \dfrac{\int_{M}\abs{\nabla^Au}^2v_g+\int_M\vert u\vert^2 qv_g}{\int_{M}\abs{u}^2v_g}=
\dfrac{\norm{\omega-h-\delta \psi}^2}{\vert M\vert}+\frac{\int_M qv_g}{\vert M\vert}
 \end{equation}  

As $\omega-h$ is harmonic, it is $L^2$-orthogonal to $\delta \psi$ and we get

\begin{equation}\label{ltwo}
\lambda_1(H_{A,q})\leq \dfrac{\norm{\omega-h}^2+\norm{\delta \psi }^2+\int_M qv_g} {\vert M\vert}
\end{equation}
Now observe that, since $\delta\psi$ is coexact and tangential, one has by the variational characterization of the eigenvalue $\lambda_{1,1}''(M,g)$:
$$
\frac{\int_{M} \vert d \delta \psi\vert^2v_g}{\int_{M} \vert \delta \psi\vert^2v_g} \ge \lambda_{1,1}''(M,g)
$$
 As $d \delta\psi=B$, we have
\begin{equation}\label{lambdaoneone}
\int_{M} \vert \delta \psi \vert^2v_g \le \frac{1}{\lambda_{1,1}''(M,g)}\Vert B\Vert^2.
\end{equation}
Taking the infimum on the right-hand side of \eqref{ltwo} over all $\omega\in{\cal L}_{\bf Z}$ we obtain, taking into account \eqref{lambdaoneone}:
$$
\l_1(H_{A,q})\leq\frac{ d(h,{\cal L}_{\bf Z})^2}{\vert M\vert}+\frac{\Vert B\Vert^2}{\lambda_{1,1}''(M)\vert M\vert}+\frac{1}{\vert M\vert}\int_M qv_g
$$
as asserted.

\medskip
When $H^1(M,\reals)=0$, we have immediately the relation
$$
\l_1(H_{A,q})\leq\frac{1}{\vert M\vert}\left(\frac{\Vert B\Vert^2}{\lambda_{1,1}''(M,g) } + \int_{M} q v_g\right).
$$
In case of equality, we must have equality in all the step of the proof: in particular, we must have
$$
\frac{\int_M\vert d\delta\psi\vert^2 v_g}{\int_M\vert \delta \psi \vert^2v_g} =\lambda_{1,1}''(M,g)
$$
which means that $\lambda_{1,1}''$ is an eigenvalue for the eigenfunction $\delta \psi$. For $u=1$, Equation (\ref{maglap})  becomes
$$
\Delta_A u=\vert\delta \psi\vert^2 
$$
and the equation $H_{A,q}u=\lambda_1(H_{A,q})u$ becomes
$$
\vert\delta \psi\vert^2 +q= \lambda_1(H_{A,q}).
$$
as asserted.

\subsection{Proof of Theorem \ref{harmonic potential}}

\begin{enumerate}
\item
Inequality (\ref{specificestimate}) is an immediate consequence of Inequality (\ref{generalestimate}).

\item
In order to investigate the equality case, we will derive the inequality using a different approach. Let $\omega \in {\cal L}_{\bf Z}$ and $u=e^{i\phi}$ the associated function on $M$ as defined in  (\ref{path}). Recall that $\abs{\cdot}$ denotes the pointwise norm, thus defining a smooth function on $M$. 

\smallskip
First we observe that
\begin{equation} \label{equality}
\Delta_A u = \vert A-\omega \vert^2 u.
\end{equation}

In  fact recall that, as $\delta A=0$:
$$
\Delta_Au=\Delta u+\abs{A}^2u+2i\scal{du}{A}.
$$
As $du=iu\omega$ one gets:
$$
\Delta u=\delta du=\delta(iu\omega)=i(-\scal{du}{\omega}+u\delta\omega)=-i\scal{du}{\omega}=\abs{\omega}^2u
$$
and \eqref{equality} follows after an easy computation. In turn, one has:
$$
H_{A,q}u=\abs{A-\omega}^2u+qu.
$$
Using $u$ as a test-function, and recalling that$\vert u\vert^2=1$, we have
\begin{equation} \label{eigenvalue}
\lambda_1(H_{A,q})\int_{M}\abs{u}^2v_g\leq \int_M\scal{H_{A,q}u}{u}v_g = \Vert \omega- A\Vert^2 + \int_M qv_g.
\end{equation}

In particular, if we choose $\omega$ so that $d(\omega,A)^2= d(A,{\cal L}_{\bf Z})^2$ we recover inequality (\ref{specificestimate}). But now, if equality holds, we  see that $u$ must be an eigenfunction for $\lambda_1(H_{A,q})$, that is
$$
\lambda_1(H_{A,q}) u=H_{A,q}u=\Delta_Au+qu=(\vert A-\omega\vert^2 +q)u.
$$

So, we deduce that $\vert A-\omega\vert^2 +q=\lambda_1(H_{A,q})$ as asserted. In particular, if $q$ is constant, $\vert A-\omega\vert$ is constant, and $(M,g)$ carries a harmonic $1$-form of constant length.
\end{enumerate}


%

\subsection{Spectrum of flat tori}
 
In order to prove Theorem \ref{genusone}, we investigate the spectrum of flat tori. Let $\Sigma$ be a flat n-dimensional torus, quotient of $\real n$ by a lattice $\Gamma$. Recall that the dual lattice $\Gamma^{\star}$ is defined by 
$$
\Gamma^{\star}=\{v: \scal{v}{w}\in{\bf Z} \quad\text{for all} \quad w\in\Gamma\}
$$
On a flat torus any harmonic $1$-form $\xi$ is parallel, and then it has constant pointwise norm $\abs{\xi}$. In particular
$$
\norm{\xi}=\abs{M}\abs{\xi}.
$$
The lattice ${\cal L}_{\bf Z}$ is an additive subgroup of the vector space of harmonic (hence parallel) $1$-forms. If $\omega$ is one such consider the associated dual parallel vector field, $\omega^{\sharp}$. We remark that this induces an isomorphism of groups:
\begin{equation}\label{iso}
{\cal L}_{\bf Z}\cong 2\pi\Gamma^{\star}.
\end{equation}
To prove that, associate to each $X\in\Gamma$  the curve $c_{X}:[0,1]\to\Sigma$ given by $c_X(t)=tX$. Note that $c_X$ is a loop because $\Sigma$ is $\Gamma-$invariant. The flux of $\omega$ across $c_X$ is easily seen to be
$$
\Phi^{\omega}_{c_X}=\dfrac{1}{2\pi}\omega(X)=\dfrac{1}{2\pi}\scal{\omega^{\sharp}}{X}.
$$
Hence any such flux is an integer if and only if $\scal{\omega^{\sharp}}{X}\in 2\pi\bf Z$. This is true for all $X\in\Gamma$ iff $\omega^{\sharp}\in 2\pi\Gamma^{\star}$, which proves \eqref{iso}. 

\smallskip

Now if $\omega\in{\cal L}_{\bf Z}$, it is readily seen that the associated function $u$
as in (\ref{path}) is given by:
$$
u(x)=e^{i\scal{\omega^{\sharp}}{\,x}}
$$
which is well-defined on $(M,g)=\real n/\Gamma$.  
Hence, for each $\omega\in{\cal L}_{\bf Z}$, thanks to (\ref{equality}), we have:
$$
\Delta_Au=\abs{A-\omega}^2u
$$
and the constant $\abs{A-\omega}^2$ is thus an eigenvalue of $\Delta_A$ associated to the eigenfunction $u$.
Because of \eqref{iso} the set
$$
\{u(x)=e^{i\scal{\omega^{\sharp}}{\,\,x}}, \omega\in {\cal L}_{\bf Z}\}
$$
gives rise to a complete orthonormal basis of $L^2(M)$, hence we have found all the eigenvalues of $\Delta_A$.  In conclusion, we have the following fact.

\begin{prop}\label{flattorus} Let $\Sigma$ be a flat torus, quotient of $\real n$ by the lattice $\Gamma$ and let $\Gamma^{\star}$ denote the lattice dual to $\Gamma$. Let $A$ be a harmonic $1$-form. Then the spectrum of the magnetic Laplacian with potential $A$, that is, the operator $\Delta_A=H_{A,0}$, is given by 
$$
\{\abs{A-\omega}^2: \omega\in {\cal L}_{\bf Z}\cong 2\pi\Gamma^{\star}\}
$$
with associated eigenfunctions
$
\{u(x)=e^{i\scal{\omega^{\sharp}}{\,\,x}}\}.
$
In particular
$$
\lambda_1(\Delta_A)=\inf_{\omega\in{\cal L}_{\bf Z}}\abs{A-\omega}^2.
$$
 \end{prop}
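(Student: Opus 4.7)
The plan is to assemble the ingredients already developed in the paper and combine them with the standard Fourier theory on a flat torus. Essentially all the hard work has been done in \eqref{equality} and in the discussion preceding the statement; what remains is to verify that the eigenfunctions $u_\omega(x)=e^{i\scal{\omega^\sharp}{x}}$ exhaust $L^2(\Sigma)$.

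First I would nail down the isomorphism $\mathcal{L}_{\bf Z}\cong 2\pi\Gamma^\star$. Since $\Sigma$ is flat, every harmonic $1$-form $\omega$ is parallel, and may be identified via the musical isomorphism with a constant vector $\omega^\sharp\in\reals^n$. For each $X\in\Gamma$ the curve $c_X(t)=tX$ descends to a loop on $\Sigma$, and a direct computation gives the flux $\Phi^\omega_{c_X}=\frac{1}{2\pi}\scal{\omega^\sharp}{X}$. Since $\Gamma$ generates the first homology of $\Sigma$, requiring $\Phi^\omega_{c_X}\in\bf Z$ for all $X\in\Gamma$ is equivalent to $\omega^\sharp\in 2\pi\Gamma^\star$, giving the stated isomorphism.

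Next I would check that for each $\omega\in\mathcal{L}_{\bf Z}$ the function $u_\omega(x)=e^{i\scal{\omega^\sharp}{x}}$ is well-defined on $\Sigma=\reals^n/\Gamma$: for $X\in\Gamma$ we have $u_\omega(x+X)=u_\omega(x)\cdot e^{i\scal{\omega^\sharp}{X}}$, and by the previous step $\scal{\omega^\sharp}{X}\in 2\pi\bf Z$, so $u_\omega$ is $\Gamma$-invariant. Moreover $u_\omega$ coincides (up to the choice of base point) with the function $u$ from \eqref{path}, since $\int_{x_0}^x\omega=\scal{\omega^\sharp}{x-x_0}$ on $\reals^n$. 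Hence by \eqref{equality}, $\Delta_A u_\omega=\abs{A-\omega}^2u_\omega$, so $\abs{A-\omega}^2$ is an eigenvalue of $H_{A,0}=\Delta_A$ with eigenfunction $u_\omega$.

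Finally I would invoke completeness. The characters $\{u_\omega:\omega\in\mathcal{L}_{\bf Z}\}$ are, up to normalization, precisely the classical Fourier basis $\{e^{i\scal{v}{x}}:v\in 2\pi\Gamma^\star\}$ of $L^2(\Sigma)$, which is well-known to be a complete orthonormal system. Since $\Delta_A$ is self-adjoint with compact resolvent and we have produced a complete orthonormal basis of eigenfunctions, there can be no further eigenvalues, so the spectrum is exactly $\{\abs{A-\omega}^2:\omega\in\mathcal{L}_{\bf Z}\}$. Taking the infimum over $\omega\in\mathcal{L}_{\bf Z}$ gives the formula for $\lambda_1(\Delta_A)$, and by the definition \eqref{ldistance} this infimum equals $d(A,\mathcal{L}_{\bf Z})^2$. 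The only subtle point is the completeness step, but once one recognizes the $u_\omega$ as the standard Fourier basis on the flat torus this is immediate, so I do not expect any genuine obstacle.
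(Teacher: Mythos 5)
Your proof follows essentially the same route as the paper: the same flux computation establishing ${\cal L}_{\bf Z}\cong 2\pi\Gamma^{\star}$, the same identification of $u_\omega$ with the function of \eqref{path} so that \eqref{equality} yields $\Delta_A u_\omega=\abs{A-\omega}^2u_\omega$, and the same appeal to completeness of the Fourier characters to conclude that the spectrum is exhausted. One small caveat, not affecting the proposition itself: your closing remark that $\inf_{\omega}\abs{A-\omega}^2$ equals $d(A,{\cal L}_{\bf Z})^2$ conflates the pointwise and $L^2$ norms, since for parallel forms $\norm{A-\omega}^2=\abs{M}\,\abs{A-\omega}^2$, so the two quantities differ by the volume factor $\abs{M}$.
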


 \subsection{Proof of Theorem \ref{genusone}}

1. Now let $M$ be a flat torus, $A=h$  a harmonic $1$-form, and let $\omega_0$  be an element in ${\cal L}_{\bf Z}$ such that 
$$
d(A,{\cal L}_{\bf Z})^2=\norm{A-\omega_0}^2=\abs{M}\abs{A-\omega_0}^2.
$$
Inequality \eqref{specificestimate} takes the form:
$$
\lambda_1(H_{A,q})\leq \abs{A-\omega_0}^2+\dfrac{1}{\abs M}\int_M qv_g
$$
with equality if and only if the associated test-function $u(x)=e^{i\scal{\omega_0^{\sharp}}{\, x}}$ (of constant modulus one) is an eigenfunction of $H_{A,q}$. As
$$
H_{A,q} u=\Delta_Au+qu=(\abs{A-\omega_0}^2+q)u
$$
we see indeed that we have equality in \eqref{specificestimate} if and only if
$$
q=\dfrac{1}{\abs{M}}\int_M q v_g
$$
that is, iff $q$ is constant. 

\smallskip
2. Now assume that $(M,g)$ is a genus one surface and $q$ is constant. It remains to show that, if equality holds, $M$ has to be flat. 

Since $q$ is constant, there exists a harmonic one form $\xi$ with constant length by the second assertion of Theorem \ref{harmonic potential}. We will apply Bochner formula to $\xi$.
Let $\Delta_{HR}$ the Laplacian on 1-forms and $\nabla$ the covariant derivative. Bochner's identity gives, for  any $1$-form $\alpha$:
\begin{equation} \label{Bochner}
\scal{\Delta_{HR}\alpha}{\alpha}= \vert \nabla\alpha\vert^2 + \frac 12 \Delta\vert\alpha\vert^2 +\mbox{Ric} (\alpha,\alpha).
\end{equation}

In dimension $2$ one has ${\rm Ric}=Kg$, where $K$ is the Gaussian curvature.  As $\xi$ is harmonic and of constant pointwise norm, we get 
$$
0=\int_M\abs{\nabla\xi}^2v_g+\abs{\xi}^2\int_MKv_g.
$$
As $M$ has genus one we see $\int_M Kv_g=0$; this means that $\xi$ must actually be parallel. But then $\star\xi$ must also be parallel ; by normalization, we have a global orthonormal basis $(\xi,\star\xi)$ of parallel one forms, which forces $(M,g)$ to be flat. 


\subsection{A few consequences} \label{consequences}

We can now describe a few consequences of Theorem \ref{firsteigenvalue} in some specific situations where we are able to control the eigenvalue $\lambda_{1,1}''$ of the manifold $M$.

\subsubsection{Positive Ricci curvature} 

 When the Ricci curvature of $M$ is positive (and $\partial M$ is convex if nonempty), then  $H^1(M,\reals)=\{0\}$. This implies that the harmonic part $h$ in the decomposition \eqref{coulomb} of the potential $1$-form $A$ vanishes, so that $A=\delta\psi$ for a tangential two-form $\psi$.  Moreover, the constant $\lambda_{1,1}''(M)$ can be controlled in terms of a lower bound of  the Ricci curvature of $M$. Indeed, we have the  
 
\begin{lemme} \label{lem ric}
  Let $(M,g)$ be a compact Riemannian manifold  whose Ricci curvature satisfies 
$$ \mbox{Ric}\ge  c\  g$$
for some positive $c$.  When $\partial M\ne \emptyset$,  assume furthermore that $\partial M$ is convex (i.e. its shape operator $S$ is nonnegative). One has
$$\lambda_{1,1}''(M)\ge 2c$$
Moreover, the equality holds if and only if every co-exact eigenform $\alpha$ associated with $\lambda_{1,1}'' (M)$ is such that $\alpha^\sharp$ is a Killing vector field which satisfies  $Ric(\alpha^\sharp) = c\alpha^\sharp$ and, when $\partial M\ne \emptyset$, $S(\alpha^\sharp)=0$.  
 \end{lemme}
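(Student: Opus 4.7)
The plan is to combine the Bochner--Weitzenböck formula on $1$-forms with the pointwise orthogonal decomposition of the covariant derivative $\nabla\alpha$ into symmetric and antisymmetric parts, in the spirit of the classical argument showing that Killing fields saturate Bochner inequalities. Let $\alpha$ be a co-exact eigenform with $\Delta_{HR}\alpha=\lambda\alpha$, where $\lambda=\lambda_{1,1}''(M)$; in particular $\delta\alpha=0$, and if $\partial M\ne\emptyset$ then $\alpha$ satisfies the absolute boundary conditions $i_N\alpha=0$ and $i_Nd\alpha=0$.

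First I would write $\nabla\alpha=S_\alpha+T_\alpha$ with $S_\alpha$ symmetric (one half the Lie derivative of $g$ along $\alpha^\sharp$) and $T_\alpha$ antisymmetric. A direct pointwise computation gives $T_\alpha=\tfrac12\,d\alpha$ as a $2$-tensor, hence $|T_\alpha|^2=\tfrac12|d\alpha|^2$, and consequently
\begin{equation}\label{sym-antisym}
|\nabla\alpha|^2=|S_\alpha|^2+\tfrac12|d\alpha|^2.
\end{equation}
Next I would integrate the Bochner formula $\Delta_{HR}\alpha=\nabla^*\nabla\alpha+\mbox{Ric}(\alpha)$ against $\alpha$, using the standard Reilly-type integration by parts adapted to absolute boundary conditions and to a co-closed form. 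The outcome, with $S$ denoting the shape operator of $\partial M$, is
\begin{equation}\label{IBP}
\lambda\int_M|\alpha|^2\,v_g=\int_M|\nabla\alpha|^2\,v_g+\int_M\mbox{Ric}(\alpha,\alpha)\,v_g+\int_{\partial M}\langle S(\alpha^\sharp),\alpha^\sharp\rangle.
\end{equation}
On the other hand, since $\delta\alpha=0$ and $\alpha$ satisfies the absolute conditions, $\int_M|d\alpha|^2\,v_g=\lambda\int_M|\alpha|^2\,v_g$.

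Substituting \eqref{sym-antisym} into \eqref{IBP} and using this identity to absorb $\tfrac12|d\alpha|^2$, I get
\begin{equation*}
\frac{\lambda}{2}\int_M|\alpha|^2\,v_g=\int_M|S_\alpha|^2\,v_g+\int_M\mbox{Ric}(\alpha,\alpha)\,v_g+\int_{\partial M}\langle S(\alpha^\sharp),\alpha^\sharp\rangle.
\end{equation*}
The curvature hypothesis $\mbox{Ric}\ge c\,g$ gives $\mbox{Ric}(\alpha,\alpha)\ge c|\alpha|^2$, and convexity of $\partial M$ (if any) makes the boundary term nonnegative. Hence $\tfrac{\lambda}{2}\ge c$, i.e.\ $\lambda_{1,1}''(M)\ge 2c$. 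Equality forces $S_\alpha\equiv 0$ (so $\alpha^\sharp$ is Killing), $\mbox{Ric}(\alpha,\alpha)=c|\alpha|^2$ pointwise (so $\mbox{Ric}(\alpha^\sharp)=c\alpha^\sharp$ since it is the equality case in the Rayleigh-type inequality for the Ricci operator), and $\langle S(\alpha^\sharp),\alpha^\sharp\rangle=0$ on $\partial M$; together with $S\ge 0$ this last condition upgrades to $S(\alpha^\sharp)=0$ by a standard Cauchy--Schwarz argument on the symmetric nonnegative operator $S$.

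The main obstacle I expect is the careful bookkeeping in \eqref{IBP}: one needs the precise Reilly-type boundary identity on $1$-forms under absolute boundary conditions to produce exactly the shape-operator term $\langle S(\alpha^\sharp),\alpha^\sharp\rangle$ with the correct sign, and to verify that the vanishing of the normal component $i_N\alpha=0$ lets the remaining boundary contributions from the two integrations by parts (Bochner on one hand, $\int|d\alpha|^2=\lambda\int|\alpha|^2$ on the other) cancel or combine as claimed. Everything else is algebraic manipulation of \eqref{sym-antisym} and the curvature hypothesis.
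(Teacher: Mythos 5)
Your proof is correct and follows essentially the same route as the paper: there too one integrates Bochner's identity against the eigenform, bounds $\vert\nabla\alpha\vert^2$ below by $\tfrac12\vert d\alpha\vert^2$ (via the Gallot--Meyer inequality, whose content is exactly your symmetric/antisymmetric splitting of $\nabla\alpha$, with equality iff $\alpha^\sharp$ is Killing), and uses $\int_M\vert d\alpha\vert^2 v_g=\lambda\int_M\vert\alpha\vert^2 v_g$ to close the loop. The boundary bookkeeping you flag as the delicate point is settled in the paper by the identity $0=d\alpha(N,\alpha^\sharp)=\tfrac12\,N\cdot\vert\alpha\vert^2-\langle S(\alpha^\sharp),\alpha^\sharp\rangle$, valid because $i_N\alpha=i_Nd\alpha=0$, which turns $\tfrac12\int_M\Delta\vert\alpha\vert^2 v_g$ into precisely your shape-operator term with the claimed sign.
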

 
 Here $S:T\bd M\to T\bd M$ is the shape operator of $\bd M$, defined as follows: if $N$ is the inner unit normal vector to the boundary, and $X\in T\bd M$,  then $S(X)=-\nabla_XN$. 

\begin{proof}

We use again the Bochner identity (\ref{Bochner}):
$$\scal{\Delta_{HR}\alpha}{\alpha}= \vert \nabla\alpha\vert^2 + \frac 12 \Delta\vert\alpha\vert^2 +\mbox{Ric} (\alpha,\alpha).$$
On the other hand, we have the following general  inequality (see \cite{GM}, Lemma 6.8 p. 270)
\begin{equation}\label{Gallot-Meyer}
\vert \nabla\alpha\vert^2\ge \frac 12 \vert d\alpha\vert^2 +\frac 1n \vert \delta\alpha\vert^2\ge \frac 12 \vert d\alpha\vert^2 
\end{equation}
in which the equality holds if and only if $\nabla\alpha$ is anti-symmetric, that is $\alpha^\sharp$ is a Killing vectorfield (see \cite{Be} Theorem 1.81, p. 40).
 When  $\alpha$ is a co-exact eigenform associated to $\lambda_{1,1}''(M)$, we have
\begin{equation}\label{bochner}
\lambda_{1,1}''(M)\vert \alpha\vert^2 = \scal{\Delta_{HR}\alpha}{\alpha}\ge \frac12 \vert d\alpha\vert^2+ \frac 12 \Delta\vert\alpha\vert^2 +c \vert \alpha\vert^2
\end{equation}
When $M$ is closed, one has $\int_M\Delta\vert\alpha\vert^2 v_g=0$ and $\int_M \vert d\alpha\vert^2 v_g=\lambda_{1,1}''(M)\int_M\vert \alpha\vert^2 v_g
$ and the result follows from \eqref{bochner} after integration, that is 
\begin{equation}\label{bochnerinteg}
\lambda_{1,1}''(M)\int_M\vert \alpha\vert^2v_g \ge \frac12 \int_M\vert d\alpha\vert^2v_g+ c\int_M \vert \alpha\vert^2 v_g = \left(\frac{\lambda_{1,1}''(M)}2 +c \right) \int_M\vert \alpha\vert^2v_g
\end{equation}
which implies $\lambda_{1,1}''(M)\ge 2c$. 

If $\partial M\ne \emptyset$, we   observe that, since $i_N\alpha=0$ and $i_N d\alpha=0$, the vector field $\alpha^\sharp$ is tangent along the boundary and $0=d\alpha(N,\alpha^\sharp) =  \nabla_N\alpha({\alpha^\sharp}) -\nabla_{\alpha^\sharp}\alpha(N) =\frac 12 N\cdot   \vert \alpha\vert^2 - \scal{S(\alpha^\sharp)}{\alpha ^\sharp} $. Thus Green formula gives
$$\int_M\Delta\vert\alpha\vert^2 v_g=\int_{\partial M} N\cdot \vert \alpha\vert^2  v_g =2\int_{\partial M} \scal{S(\alpha^\sharp)}{\alpha ^\sharp} v_g\ge0$$
The rest of the proof is the same as above.

\smallskip
 
Let us discuss the equality case when $\partial M $ is empty. Assume that a co-exact eigenform $\alpha $ satisfies : $\alpha^\sharp$ is a Killing vector field and $Ric(\alpha^\sharp,\alpha^\sharp) = c\vert\alpha\vert^2$. Under these conditions,  the equality holds in the inequality \eqref{bochner} and, then, in  \eqref{bochnerinteg}, which implies $\lambda_{1,1}''(M)=2c$.

Conversely,  if $\lambda_{1,1}''$(M)$ = 2c$, then, for any co-exact eigenform $\alpha$,  the equality holds in \eqref{bochnerinteg} and, then, in \eqref{bochner} and \eqref{Gallot-Meyer} which implies that 
$Ric(\alpha^\sharp,\alpha^\sharp) = c\vert\alpha\vert^2$ and that $D\alpha$ is anti-symmetric,  that is $\alpha^\sharp$ is a Killing vector field.

When $\partial M $ is not empty, the discussion of the equality case follows the same lines  observing that since $\partial M $ is convex, the equality $\scal{S(\alpha^\sharp)}{\alpha ^\sharp}=0$ occurs if and only if $S(\alpha^\sharp)=0$.
\end{proof}
An immediate consequence of Theorem \ref{firsteigenvalue} is the
 \begin {cor} \label{cor ric}
 Under the circumstances   of Theorem  \ref{firsteigenvalue} and the  assumption that the Ricci curvature of $M$ satisfies 
 $ \mbox{Ric}\ge  c\  g$
for some positive $c$, and that the boundary $\partial M$ is convex (if nonempty), one has 
\begin{equation}\label{ineqric}
\lambda_1( H_{A,q} )\le \frac 1{\vert M\vert}\left(\frac {\Vert B\Vert^2}{2c}  +\int_Mq v_g\right)
\end{equation}   
where $B=dA$ is the magnetic field. The equality holds in \eqref{ineqric} if and only if  
$A^{\sharp}$ is a Killing vector field with $Ric (A^{\sharp})=c A^{\sharp}$, $\vert A^{\sharp}\vert ^2 +q = \lambda_1( H_{A,q} )$ and, when $\partial M\ne \emptyset$, $S(A^{\sharp})=0$.
 \end{cor}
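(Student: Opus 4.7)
The plan is to combine Theorem \ref{firsteigenvalue}(2) with the eigenvalue lower bound of Lemma \ref{lem ric}. The key observation is that the positive Ricci hypothesis (together with convexity of $\partial M$) forces the first de Rham cohomology to vanish, which places us precisely in the situation where the refined estimate \eqref{nocohomology} applies.

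First I would note that a standard Bochner argument on harmonic $1$-forms satisfying absolute boundary conditions — exactly the integrations leading to \eqref{bochnerinteg} applied to a harmonic $\alpha$ — gives $0 \ge c\int_M|\alpha|^2 v_g + (\text{boundary term} \ge 0)$, so $\alpha=0$. Hence $H^1(M,\reals)=0$, and the Hodge decomposition of Proposition \ref{basic facts} reduces to $A=\delta\psi$, i.e. $h=0$. Then Theorem \ref{firsteigenvalue}(2) applies directly:
\begin{equation*}
\lambda_1(H_{A,q})\le\frac{1}{|M|}\left(\frac{\|B\|^2}{\lambda_{1,1}''(M)}+\int_M q\,v_g\right).
\end{equation*}
Lemma \ref{lem ric} gives $\lambda_{1,1}''(M)\ge 2c$, hence $1/\lambda_{1,1}''(M)\le 1/(2c)$, and substituting yields \eqref{ineqric}.

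For the equality case, note that equality in \eqref{ineqric} forces equality at both intermediate steps: equality in Theorem \ref{firsteigenvalue}(2) and equality $\lambda_{1,1}''(M)=2c$ in Lemma \ref{lem ric}. The former tells us that $\delta\psi = A$ is a co-exact eigenform of $\Delta_{HR}$ with eigenvalue $\lambda_{1,1}''(M)$ and that $|A|^2+q = \lambda_1(H_{A,q})$ pointwise. The latter, by the characterization of equality established in Lemma \ref{lem ric}, says that every co-exact eigenform for $\lambda_{1,1}''$ — in particular $A$ itself — has $A^\sharp$ Killing, satisfies $\mathrm{Ric}(A^\sharp)=c\,A^\sharp$, and, when $\partial M\ne\emptyset$, $S(A^\sharp)=0$. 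Rewriting $|A|^2=|A^\sharp|^2$ then delivers all the claimed equality conditions.

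For the converse in the equality statement, assuming $A^\sharp$ is Killing with $\mathrm{Ric}(A^\sharp)=cA^\sharp$ and $S(A^\sharp)=0$ on $\partial M$, one has $\delta A=0$ (Killing) and $i_NA=0$ (since $S(A^\sharp)=0$ is compatible with and encodes the tangency through the boundary identity used in the proof of Lemma \ref{lem ric}); the Bochner identity \eqref{Bochner} with equality in \eqref{Gallot-Meyer} then yields $\Delta_{HR}A=2cA$, so $A$ is a co-exact eigenform at the sharp eigenvalue $2c$. Using the computation \eqref{equality} with $\omega=0$ together with the pointwise identity $|A|^2+q=\lambda_1(H_{A,q})$ recovers equality in \eqref{ineqric}. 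The main subtlety — and the only non-bookkeeping step — is the forcing of $H^1(M,\reals)=0$, for which one must be careful to push the Bochner boundary term through the convexity hypothesis; everything else is an inheritance of the equality discussions already done in Theorem \ref{firsteigenvalue} and Lemma \ref{lem ric}.
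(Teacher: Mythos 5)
Your proposal is correct and follows exactly the route the paper intends: the paper presents Corollary \ref{cor ric} as an immediate consequence of Theorem \ref{firsteigenvalue} once one notes that positive Ricci curvature (with convex boundary) forces $H^1(M,\reals)=0$ via the Bochner vanishing argument, so $h=0$, and then substitutes the bound $\lambda_{1,1}''(M)\ge 2c$ from Lemma \ref{lem ric}; the equality discussion is likewise obtained by concatenating the equality cases of Theorem \ref{firsteigenvalue}(2) and Lemma \ref{lem ric}, as you do. The only cosmetic remark is that in the converse direction the tangency $i_NA=0$ is already part of the standing normalization of the potential (Proposition \ref{basic facts}), rather than something encoded by $S(A^{\sharp})=0$, and the identity $\Delta_{HR}A=2\,{\rm Ric}(A)$ for Killing fields gives $\Delta_{HR}A=2cA$ directly.
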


Recall that Bochner vanishing Theorem tells us that a non Ricci-flat  manifold $M$ with non-negative Ricci curvature and mean-convex boundary if $\partial M\ne\emptyset$, satisfies $H^1(M,\reals)=\{0\}$. On the other hand, Bochner's identity gives for any Killing vector field $A$, $\Delta_{HR}A=2 \,{\rm Ric}(A)$.

\smallskip
The inequality \eqref{ineqric} improves by a factor 2  the estimate obtained  by Cruzeiro, Malliavin and Taniguchi (\cite{CMT}, Theorem 1.1) for $\lambda_1( H_{A,0} )$ for closed  manifolds (be careful, the magnetic Laplacian defined in \cite{CMT} coincides with $ \frac 12 \Delta_{A}$).

\smallskip
An important special case we want to emphasize is the following

 \begin {cor} \label{cor sphere}
(i) Let $H_{A,q} $  be a magnetic Schr\"{o}dinger operator   on the standard $n$-dimensional sphere $\mathbb S^n$. One has 
\begin{equation}\label{ineqsphere}
\lambda_1( H_{A,q} )\le \frac 1{\sigma_n} \left(\frac {\Vert B\Vert^2}{2(n-1)}  +\int_{\mathbb S^n} q v_g\right)
\end{equation}   
where $\sigma_n=(n+1)\omega_{n+1}$ is the volume of   $\mathbb S^n$ and $B=dA$ is the magnetic field. The equality holds in \eqref{ineqsphere} if and only if $A^{\sharp}$ is a Killing vector field of $\mathbb S^n$ and $\vert A^{\sharp}\vert ^2 +q= \lambda_1( H_{A,q} )$.

\smallskip

(ii) Let $H_{A,q} $  be a magnetic Schr\"{o}dinger operator   on a spherical cap $C_r(x_0)$ of radius $r\le \frac \pi 2$ centered at $x_0$.  One has
\begin{equation}\label{sphericalcap}
\lambda_1( H_{A,q} )\le \frac 1{v_n ( r )} \left(\frac {\Vert B\Vert^2}{2(n-1)}+\int_{C_r}q v_g\right) 
\end{equation}
where  $v_n( r ) = \sigma_{n-1}\int_0^r (\sin t)^{n-1} dt$ is the volume of $C_r(x_0)$.

If $r<\frac \pi 2$, then the equality holds if and only if $B=0$ and $q$ is constant. When $r=\frac \pi 2$ (i.e for a hemisphere),   the equality  holds in \eqref{sphericalcap} if and only if $A^{\sharp}$ is a Killing vector field which vanishes at $x_0$  and $\vert A^{\sharp}\vert ^2 +q= \lambda_1( H_{A,q} )$. 

 \end{cor}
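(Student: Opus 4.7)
The plan is to apply Corollary \ref{cor ric} with $c = n-1$, since both $\mathbb{S}^n$ and any spherical cap $C_r(x_0)$ inherit the constant Ricci curvature $\mbox{Ric} = (n-1)\,g$ of the round metric.

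For part (i), the sphere $\mathbb{S}^n$ is closed and $H^1(\mathbb{S}^n,\reals) = 0$, so Corollary \ref{cor ric} applies directly and yields \eqref{ineqsphere} with volume $\sigma_n$. In the equality case, the condition $\mbox{Ric}(A^\sharp) = (n-1)\,A^\sharp$ is automatic from the Einstein relation, so the characterization reduces to the assertion that $A^\sharp$ is Killing and $\vert A^\sharp\vert^2 + q = \lambda_1(H_{A,q})$, as stated.

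For part (ii), I first need to verify convexity of $\partial C_r(x_0)$. A standard computation in constant curvature gives that its shape operator with respect to the inward normal equals $\cot(r)\,\mathrm{Id}$: this is nonnegative for $r\le \pi/2$, strictly positive for $r < \pi/2$, and zero at $r = \pi/2$. Since $C_r$ is contractible, $H^1(C_r,\reals) = 0$, and Corollary \ref{cor ric} gives \eqref{sphericalcap} with volume $v_n(r)$.

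The substantive part of the argument is the equality analysis. When $r < \pi/2$, strict convexity gives $\cot(r) > 0$, so the condition $S(A^\sharp) = 0$ forces $A^\sharp$ to vanish along $\partial C_r$. The main obstacle is the rigidity step: a Killing field that vanishes on a hypersurface of a connected Riemannian manifold must vanish identically. I expect to establish this by noting that if $A^\sharp = 0$ on $\partial C_r$ then $\nabla_X A^\sharp = 0$ for every $X$ tangent to the boundary, while the antisymmetry of $\nabla A^\sharp$ together with $\langle \nabla_N A^\sharp, N\rangle = 0$ forces $\nabla_N A^\sharp = 0$ as well; thus the $1$-jet of $A^\sharp$ vanishes at a boundary point and, by the standard uniqueness of Killing fields on a connected manifold, $A^\sharp\equiv 0$. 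Hence $A = 0$, so $B = 0$, and $\vert A^\sharp\vert^2 + q = q$ must be constant; the converse direction is immediate by testing with the constant function. When $r = \pi/2$ the shape operator vanishes identically, so $S(A^\sharp) = 0$ is automatic. Identifying $\mathbb{S}^n \subset \reals^{n+1}$ with $x_0 = e_{n+1}$, the Killing fields of $\mathbb{S}^n$ that are tangent to the equator along it are exactly those of the form $x\mapsto Xx$ with $X \in \mathfrak{so}(n+1)$ satisfying $Xe_{n+1} = 0$, which are precisely the Killing fields vanishing at $x_0$; combined with $\vert A^\sharp\vert^2 + q = \lambda_1(H_{A,q})$ this gives the stated equality characterization.
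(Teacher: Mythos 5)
Your proof is correct and follows essentially the same route as the paper: both deduce the corollary from Corollary \ref{cor ric} with $c=n-1$, and both settle the equality case by observing that $\partial C_r(x_0)$ is totally umbilical with $S=\cot(r)\,\mathrm{Id}$, so that $S(A^\sharp)=0$ forces $A^\sharp=0$ when $r<\pi/2$, while for $r=\pi/2$ tangency of a Killing field along the equator is equivalent to its vanishing at $x_0$. The only difference is that you spell out the rigidity step (a Killing field vanishing on the boundary hypersurface has vanishing $1$-jet there and hence vanishes identically), which the paper leaves implicit.
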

Indeed, a Killing vector field is tangent along $\partial C_r(x_0)$ if and only if it vanishes at $x_0$. Since $\partial C_r(x_0)$ is totally umbilical, the condition $S(A^{\sharp})=0$ implies that $A^{\sharp}=0$ unless $S=0$ which only occurs when $r=\frac \pi 2$.  

\smallskip
 In particular, for a magnetic Laplacian $\Delta_A=H_{A,0}$ on $\mathbb S^n$, the inequality \eqref{ineqsphere} reads
 \begin{equation}\label{ineqsphere q=0}
 \lambda_1( H_{A,0} )\le \frac 1{2(n-1)\sigma_n} \Vert B\Vert^2.
 \end{equation}  
 
 Notice that when $n$ is even,  there is no nonzero vector field of constant length on $\mathbb S^n$. Therefore, in the even dimensional case, 
 the equality in \eqref{ineqsphere q=0} holds if and only if $A^{\sharp}=0$ (or, equivalently, $B=0$). If $n$ is odd, then  the equality in \eqref{ineqsphere q=0} implies that $A^{\sharp}$ is proportional to the vector field $J(x)=(-x_2,x_1,\cdots, -x_{n+1}, x_n)$ which is the only Killing vector field of constant length, up to a dilation (see \cite{DS}).

In dimension 2, the inequality \eqref{ineqsphere q=0} (i.e. $\lambda_1( H_{A,0} )\le \frac 1{8\pi} \Vert B\Vert^2 $)
improves the upper bound obtained by Besson, Colbois and Courtois in \cite{BCC}.


\subsubsection{Closed hypersurfaces}

We now assume that $M$ is a closed, immersed hypersurface of a Riemannian manifold $M'$. At any point $x\in M$ denote the principal curvatures of $M$ (eigenvalues of the shape operator) by
$
k_1(x),\dots,k_n(x).
$
Let $I_p$ denote the set  of $p$-multi-indices
$$
I_p=\{(j_1,\dots,j_p): 1\leq j_1\leq \cdots\leq j_p\leq n\},
$$
and , for each $\alpha=(j_1,\dots,j_p)\in I_p$, consider the corresponding $p$-curvature 
$$
K_{\alpha}(x)=k_{j_1}(x)+\dots+k_{j_p}(x).
$$ 
Set $\star\alpha=\{1,\dots,p\}\setminus\{j_1,\dots,j_p\}$, and moreover
$$
\twosystem
{\beta_p(x)=\frac{1}{p(n-p)}\inf_{\alpha\in I_p}K_{\alpha}(x)K_{\star\alpha}(x)}
{\beta_p(\Sigma)=\inf_{x\in\Sigma}\beta_p(x)}
$$
We then have the following  lower bound (see Theorem 7 in \cite{Sa1}):

\begin{thm}  Let $M^n$ be a closed immersed hypersurface of the Riemannian manifold $M'^{n+1}$ having curvature operator bounded below by $\gamma_{M'}\in\reals$. 
Then we have the following lower bound 
$$
\lambda_{1,p}(M)\geq p(n-p+1)(\gamma_{M'}+\beta_p(M)).
$$
Equality holds for geodesic spheres in constant curvature spaces. In particular
$$
\lambda_{1,1}''(M)\geq \max\{2(n-1)(\gamma_{M'}+\beta_2(M)), n(\gamma_{M'}+\beta_1(M)\}.
$$
\end{thm}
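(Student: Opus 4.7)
The plan is to combine three ingredients: the Bochner--Weitzenb\"ock formula for $p$-forms, the Gauss equation (which trades intrinsic curvature of $M$ for the ambient Riemann tensor of $M'$ plus shape-operator terms), and the refined Kato inequality for co-exact eigenforms, which sharpens the naive Gallot--Meyer coefficient $p(n-p)$ into the $p(n-p+1)$ appearing in the statement.

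First, I would pick a co-exact eigenform $\omega$ associated with $\lambda_{1,p}''(M)$, so $\delta\omega=0$ and $\Delta_{HR}\omega=\lambda\omega$, and write the Bochner--Weitzenb\"ock identity
\begin{equation*}
\langle\Delta_{HR}\omega,\omega\rangle = \vert\nabla\omega\vert^2 + \tfrac{1}{2}\Delta\vert\omega\vert^2 + \langle W_p^M\omega,\omega\rangle.
\end{equation*}
Gauss's equation splits the Weitzenb\"ock endomorphism as $W_p^M = W_p^{\mathrm{amb}} + W_p^S$, where $W_p^{\mathrm{amb}}$ only involves the ambient Riemann tensor of $M'$ restricted to $T^*M$ and $W_p^S$ only involves the shape operator $S$. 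The Gallot--Meyer theorem, applied to the curvature-operator bound on $M'$, gives $\langle W_p^{\mathrm{amb}}\omega,\omega\rangle \geq p(n-p)\gamma_{M'}\vert\omega\vert^2$ pointwise.

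For the shape-operator contribution I would diagonalize $S$ at each point with an orthonormal principal frame $(e_1,\dots,e_n)$ and principal curvatures $k_1,\dots,k_n$, and expand $\omega = \sum_{\alpha\in I_p}\omega_\alpha\,e^\alpha$ with $e^\alpha = e^{j_1}\wedge\cdots\wedge e^{j_p}$. Propagating the second-fundamental-form terms of Gauss's equation through the $p$-th exterior power yields that $W_p^S$ is diagonal in this basis with eigenvalues $K_\alpha K_{\star\alpha}$, giving
\begin{equation*}
\langle W_p^S\omega,\omega\rangle = \sum_{\alpha\in I_p} K_\alpha\, K_{\star\alpha}\,\vert\omega_\alpha\vert^2 \;\geq\; p(n-p)\,\beta_p(M)\,\vert\omega\vert^2
\end{equation*}
by the very definition of $\beta_p(M)$. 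Adding the two curvature estimates yields $\langle W_p^M\omega,\omega\rangle \geq p(n-p)(\gamma_{M'}+\beta_p(M))\vert\omega\vert^2$ everywhere on $M$.

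To finish, I invoke the refined Kato inequality for co-exact $p$-eigenforms, $\vert\nabla\omega\vert^2 \geq \tfrac{1}{n-p+1}\vert d\omega\vert^2$. Since $\delta\omega=0$ one has $\int_M\vert d\omega\vert^2 v_g = \lambda\int_M\vert\omega\vert^2 v_g$, while $\int_M\Delta\vert\omega\vert^2 v_g=0$ by Stokes, so integrating the Bochner identity gives
\begin{equation*}
\lambda\,\frac{n-p}{n-p+1}\int_M\vert\omega\vert^2 v_g \;\geq\; p(n-p)\bigl(\gamma_{M'}+\beta_p(M)\bigr)\int_M\vert\omega\vert^2 v_g,
\end{equation*}
hence $\lambda_{1,p}(M)\geq p(n-p+1)(\gamma_{M'}+\beta_p(M))$, the exact case following from $\lambda_{1,p}'=\lambda_{1,p-1}''$ together with Hodge duality. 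The ``in particular'' line then follows by applying this bound at $p=1$ and at $p=2$, combined with the inequality $\lambda_{1,1}''\geq\max\{\lambda_{1,1},\lambda_{1,2}\}$ already recorded in the preliminaries. Equality for geodesic spheres in constant-curvature spaces is verified directly: $k_i\equiv 1/r$ makes every $K_\alpha K_{\star\alpha}$ equal, the ambient curvature operator is a multiple of the identity (so Gallot--Meyer is saturated), and the refined Kato is an equality on the eigenforms built from restrictions of linear coordinates. The main technical obstacle is the multilinear computation showing that $W_p^S$ is diagonalized by the shape-operator basis with eigenvalues $K_\alpha K_{\star\alpha}$; the remaining ingredients are standard.
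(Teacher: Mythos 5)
First, a caveat: the paper itself does not prove this statement; it is imported verbatim from Theorem 7 of \cite{Sa1}, so there is no in-paper proof to compare against. Your strategy --- Bochner--Weitzenb\"ock, the Gauss-equation splitting of the curvature term, diagonalization of the shape-operator part in a principal frame with eigenvalues $K_\alpha K_{\star\alpha}$, and a refined Kato inequality --- is certainly the intended route, and the pointwise estimate $\scal{W_p\omega}{\omega}\ge p(n-p)(\gamma_{M'}+\beta_p(M))\abs{\omega}^2$ is correct.

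The gap is in the Kato step, and it contaminates the final assembly. The refined (Gallot--Meyer) inequality for a $p$-form reads $\abs{\nabla\omega}^2\ge\tfrac{1}{p+1}\abs{d\omega}^2+\tfrac{1}{n-p+1}\abs{\delta\omega}^2$; compare the paper's own display \eqref{Gallot-Meyer}, which is the case $p=1$. The coefficient $\tfrac{1}{n-p+1}$ belongs to the $\abs{\delta\omega}^2$ term, not to $\abs{d\omega}^2$. The inequality you invoke for co-exact forms, $\abs{\nabla\omega}^2\ge\tfrac{1}{n-p+1}\abs{d\omega}^2$, is actually false for $p>n/2$: at $p=n-1$ it would assert $\abs{\nabla\omega}^2\ge\tfrac12\abs{d\omega}^2$, which Hodge-dualizes to $\abs{{\rm Hess}\,f}^2\ge\tfrac12(\Delta f)^2$ for closed $1$-forms $df$, and this fails for $n\ge3$ (take ${\rm Hess}\,f$ proportional to the metric). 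Using the correct coefficient $\tfrac{1}{p+1}$ on a co-exact eigenform gives $\lambda_{1,p}''\ge(p+1)(n-p)(\gamma_{M'}+\beta_p)$, not the stated $p(n-p+1)(\gamma_{M'}+\beta_p)$. The constant $p(n-p+1)$ is what the identical computation yields when run on an \emph{exact} eigenform, where $d\omega=0$, $\int_M\abs{\delta\omega}^2 v_g=\lambda\int_M\abs{\omega}^2v_g$, and the $\tfrac{1}{n-p+1}$ coefficient is the relevant one; consistently, $\lambda_{1,p}'(\sphere{n})=p(n-p+1)$ realizes equality for the unit sphere, while the full $\lambda_{1,p}(\sphere{n})=\min\{p(n-p+1),(p+1)(n-p)\}$ drops below the claimed bound for $p>n/2$ (e.g.\ $\lambda_{1,2}(\sphere{3})=3<4$), so the theorem must be understood as a bound on the exact spectrum $\lambda_{1,p}'$. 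Your closing remark that ``the exact case follows from $\lambda_{1,p}'=\lambda_{1,p-1}''$ together with Hodge duality'' then runs the logic backwards: your co-exact bound at degree $p-1$ would produce $(p-1)(n-p+2)(\gamma_{M'}+\beta_{p-1})$, which is not the desired quantity. The repair is to run Bochner directly on exact $p$-eigenforms and obtain the ``in particular'' from $\lambda_{1,1}''=\lambda_{1,2}'$ together with the $p=1$ case.
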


We say that $M$ is {\it $p$-convex} if all $p$-curvatures are non-negative; that is, $K_{\alpha}(x)\geq 0$ for all $\alpha\in I_p$ and for all $x\in M$. Clearly if $M$ is $p$-convex then it is $q$-convex for all $q\geq p$. Then, $1$-convexity is the usual convexity assumption and $n$-convex is equivalent to mean convexity.   

\medskip

Note that we could have a positive lower bound even when the  curvature of $M'$ is negative; it is enough to assume that the $p$-curvatures $K_{\alpha}$ are positive enough. For example, for a $2$-convex hypersurface in hyperbolic space ${\bf H}^{n+1}$, (where $\gamma_{M'}=1$) with $2$-curvatures $K_{\alpha}(x)$ uniformly bounded below by $c>2$, elementary algebra shows that $\beta_2(M)\geq c^2/4$ hence 
$$
\lambda_{1,1}''\geq 2(n-1)(\frac{c^2}4-1)>0
$$
On the other hand, if $M^n$ is a $2$-convex hypersurface of the sphere $\sphere{n+1}$ then $\beta_2(M)\geq 0$ and therefore
$$
\lambda_{1,1}''\geq 2(n-1).
$$

\medskip

We finally remark the following estimate by P. Guerini (\cite{Gue}): if $M$ is a convex hypersurface of $\real n$ then
$$
\lambda_{1,p}(M)\geq\dfrac{p}{2e^3}\cdot\frac{1}{{\rm diam}(M)^2}.
$$


\subsubsection{Convex domains in Euclidean space} Assume now that $M$ is a convex domain in $\real n$. Then we know from \cite{Sa2} that  for all $p=1,\dots,n$
one has $\lambda_{1,p}=\lambda_{1,p}'$; in particular :
$$
\lambda_{1,1}''=\lambda_{1,2}.
$$
Theorem 1.1 in \cite{Sa2} states that, for all $p=1,\dots,n$:
$$
\dfrac{a_{n,p}}{D_p^2}\leq\lambda_{1,p}\leq \dfrac{a'_{n,p}}{D_p^2}
$$
for explicit constants $a_{n,p},a'_{n,p}$. 
Here $D_p$ is the $p$-th largest principal axis of the ellipsoid of maximal volume included in $M$, also called {\it John ellipsoid} of $M$.  In particular, 
$$
\lambda_{1,1}''\geq \dfrac{4}{n^3 D_2^2}.
$$
Accordingly we have an upper bound for the spectrum of the magnetic Laplacian:
$$
\lambda_1(H_{A,0})\leq \frac{1}{\vert M\vert}\left( \frac{\Vert B\Vert^2}{\lambda_{1,1}''(M) } + \int_{M} q v_g \right)\leq\dfrac{n^3\norm{B}^2D_2^2}{4\abs{M}}+\dfrac{1}{\abs{M}}\int_{M} q v_g
$$
For example, assume that $q=0$ and 
$$
\dfrac{1}{\abs{M}}\int_M\norm{B}^2\leq c.
$$
We then see
$$
\lambda_1(H_{A,0})\leq \frac{cn^3}{4}D_2^2
$$


\subsubsection{Other estimates}

We refer to \cite{GS} for a lower bound of $\lambda_{1,1}''$ of any compact manifold with boundary $\Omega$, in terms of a lower bound $\gamma\in\reals$ of the eigenvalues of the curvature operator of $\Omega$, and the $2$-curvatures of $\bd\Omega$: if the $2$ curvatures are large enough, then the lower bound is positive (see Theorem 3.3 in \cite{GS}). We also remark that in certain cases it is possible to estimate from below the gap $\lambda_{1,p}-\lambda_{1,0}$ between the first eigenvalue for $p$-forms (absolute boundary conditions) and the first eigenvalue on functions (Neumann conditions). For example, for convex domains in $\sphere n$ one has, for $p=2,\dots, \frac n2$:
$$
\lambda_{1,p}\geq \lambda_{1,0}+(p-1)(n-p)
$$
which reduces to an equality when $\Omega$ is the hemisphere. In particular,
$$
\lambda_{1,1}''\geq \lambda_{1,0}+n-2
$$
which often improves the bound $\lambda_{1,1}''\geq 2(n-1)$ considered in Corollary \ref{cor sphere} above : in fact, $\lambda_{1,0}$ is the first positive eigenvalue for the Neumann Laplacian acting on functions, which can be very large (for example,  for small geodesic balls).


\section{Upper bounds for the second eigenvalue of $H_{A,q}$} \label{second}

Let us first give the proof of Theorem \ref{volconf}: it is a consequence of the observation (\ref{comparaison}) and of a previous result of El Soufi and Ilias \cite{EI2}. By (\ref{comparaison}), we have
$$
\lambda_2(H_{A,q}) \le \lambda_2(H(0,\vert A\vert^2 +q)
$$
which corresponds to the usual Laplacian $\Delta$ on $(M,g)$ with the potential $\vert A\vert^2 +q$, and $A=\delta \psi +h$ as in (\ref{decomposition}). By \cite{EI2}, for any scalar potential $W$ on $M$ one has  
$$\lambda_2(\Delta +W)\le n\left(\frac {V_c(M)}{\vert M\vert}\right)^{\frac2n} + \frac {1}{\vert M\vert}\int_MW v_g, $$ 
where $V_c(M)$ is the Li-Yau conformal volume of the Riemannian manifold $M$. In our situation, $W=\vert \delta \psi +h\vert^2 +q$ and we have already seen that
$$
\frac{1}{\vert M\vert}\int_M (\vert \delta \psi +h\vert^2 +q)v_g \le \Gamma(M,A,q):=\frac{1}{\vert M\vert}\left( d(h,{\cal L}_{\bf Z})^2 +\frac{\Vert B\Vert^2}{\lambda_{1,1}''(M) } + \int_{M} qv_g \right)
$$
which allows to conclude.

\medskip 
As for the first eigenvalue, we have a lot of consequences of this result in specific situations. For example, the conformal volume of the sphere $\mathbb S^n$ endowed with the conformal class of its standard metric $g_s$ is equal to the volume $\sigma_n=\vert\mathbb S^n\vert_{g_s}$ of the standard metric. 
Hence, any domain $\Omega\subset\mathbb S^n$, endowed with a metric conformal to the standard one  will satisfies $V_c(\Omega)\le \sigma_n$. 

 \begin {cor} \label{confeuc}
 Let $H_{A,q} $ be a magnetic Schr\"{o}dinger operator on a bounded domain $\Omega\subset \reals^n$, endowed with a Riemannian metric $g$ conformally equivalent to the Euclidean metric.  One has, under  Neumann boundary conditions,
 \begin{equation}\label{confeuc1}
 \lambda_2(H_{A,q})  \le n\left(\frac {\sigma_n}{\vert \Omega\vert_g}\right)^{\frac2n} + \frac 1{\vert \Omega\vert_g} \left(\frac {\Vert B\Vert^2}{ \lambda_{1,1}'' (\Omega)}  + d(h,{\cal L}_{\bf Z})^2+\int_\Omega q v_g\right).
 \end{equation}  
\end {cor}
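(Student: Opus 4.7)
The plan is to apply Theorem \ref{volconf} directly to $M=\Omega$ equipped with the conformally Euclidean metric $g$, and then control the conformal volume factor $V_c(\Omega)$ by $\sigma_n$. Since the remaining terms in the right-hand side are exactly $\Gamma(\Omega,A,q)$, this immediately yields the claimed inequality.

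First I would write, by Theorem \ref{volconf} applied to $(\Omega,g)$ with Neumann boundary conditions,
\begin{equation*}
\lambda_2(H_{A,q})\le n\left(\frac{V_c(\Omega)}{|\Omega|_g}\right)^{2/n}+\Gamma(\Omega,A,q),
\end{equation*}
where $\Gamma(\Omega,A,q)=\frac{1}{|\Omega|_g}\bigl(d(h,\mathcal{L}_{\mathbf{Z}})^2+\|B\|^2/\lambda_{1,1}''(\Omega)+\int_\Omega q\,v_g\bigr)$ with the Hodge decomposition $A=\delta\psi+h$. The result will then follow from the bound $V_c(\Omega)\le \sigma_n$.

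To justify $V_c(\Omega)\le\sigma_n$, I would use that the Li--Yau conformal volume depends only on the conformal class $[g]$, and that since $g$ is conformal to the Euclidean metric on $\Omega\subset\mathbb{R}^n$, it is in particular conformal to the pullback of the round metric of $\mathbb{S}^n$ via the inverse of stereographic projection $\sigma:\mathbb{S}^n\setminus\{pt\}\to\mathbb{R}^n$. Hence $\sigma^{-1}$ realizes a conformal embedding of $(\Omega,[g])$ into $(\mathbb{S}^n,[g_s])$. Since the class of admissible conformal immersions into $\mathbb{S}^N$ only grows when one restricts to a subdomain, $V_c$ is monotone under such conformal inclusions; combined with the classical Li--Yau identity $V_c(\mathbb{S}^n,[g_s])=\sigma_n$, this gives $V_c(\Omega,[g])\le \sigma_n$.

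The main obstacle — and essentially the only nontrivial step — is the conformal volume monotonicity under the embedding $(\Omega,[g])\hookrightarrow(\mathbb{S}^n,[g_s])$; everything else is a direct substitution into Theorem \ref{volconf}. I would therefore make sure to cite \cite{EI1,EI2} for both the precise definition of $V_c$ and the monotonicity property, so that combining $V_c(\Omega)\le\sigma_n$ with the displayed inequality above yields
\begin{equation*}
\lambda_2(H_{A,q})\le n\left(\frac{\sigma_n}{|\Omega|_g}\right)^{2/n}+\frac{1}{|\Omega|_g}\left(\frac{\|B\|^2}{\lambda_{1,1}''(\Omega)}+d(h,\mathcal{L}_{\mathbf{Z}})^2+\int_\Omega q\,v_g\right),
\end{equation*}
which is exactly \eqref{confeuc1}.
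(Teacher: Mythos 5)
Your proposal is correct and follows exactly the paper's route: apply Theorem \ref{volconf} to $(\Omega,g)$ and bound $V_c(\Omega)\le\sigma_n$ using that $\Omega$ embeds conformally in $(\mathbb{S}^n,g_s)$, whose conformal volume is $\sigma_n$, together with the monotonicity of $V_c$ under passing to subdomains. Note only that you correctly use the form $n\left(V_c(M)/\vert M\vert\right)^{2/n}$ with the exponent $2/n$, which is what the proof of Theorem \ref{volconf} actually establishes and what the corollary requires, even though the exponent is missing from the displayed statement \eqref{confvol_ineq} in the paper.
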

This corollary applies of course when $\Omega$ is a domain of the Euclidean space, the hyperbolic space, and the sphere. Note that the equality holds in \eqref{confeuc1} when $g$ is the spherical metric, $A=0$, $q=0$  and  $\Omega$ is a ball whose Euclidean radius tends to infinity.

For a compact orientable surface $M$ of genus $\gamma$, one has (see \cite{LY})
\begin{equation}\label{confsurf}
{V_c(M)}\le 4\pi\left[{\frac{\gamma+3}{2}}\right]
 \end{equation} 
 and
 $$ \lambda_{1,1}''(M)=\mu (M)$$
 where $[ ]$ stands for the floor function and $\mu(M)$ is the first positive  eigenvalue of the Laplacian of $M$ acting on functions, with Neumann boundary conditions if $\partial M\ne \emptyset$.  Thus, the inequality (\ref{confvol_ineq}) leads to the following:
 
 \begin {cor} \label{cor surf}
Let $H_{A,q} $  be a magnetic Schr\"{o}dinger operator  on a domain $\Omega $ of a compact orientable Riemannian surface $M$ of genus $\gamma$. One has
\begin{equation}\label{ineqsurf}
\lambda_2( H_{A,q} ) \vert \Omega\vert\le  8\pi\left[{\frac{\gamma+3}{2}}\right] +  \frac {\Vert B\Vert^2}{ \mu(\Omega)}  + d(h,{\cal L}_{\bf Z})^2+ \int_{\Omega}qv_g .
\end{equation}   
where $  \mu(\Omega)$ is the first positive  eigenvalue of the Laplacian on functions, with Neumann b. c. if $\Omega \subsetneq M$.
 \end{cor}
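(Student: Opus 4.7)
The plan is a direct specialization of Theorem \ref{volconf} to dimension two, combined with two standard facts that the corollary uses implicitly.

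First I would apply Theorem \ref{volconf} to $\Omega$ with $n=2$. Since $(V_c(\Omega)/\vert\Omega\vert)^{2/n}=V_c(\Omega)/\vert\Omega\vert$ when $n=2$, the inequality reads
\begin{equation*}
\lambda_2(H_{A,q}) \le 2\,\frac{V_c(\Omega)}{\vert\Omega\vert} + \Gamma(\Omega,A,q).
\end{equation*}
Multiplying by $\vert\Omega\vert$ and expanding $\Gamma$ from \eqref{generalestimate}, I would obtain
\begin{equation*}
\lambda_2(H_{A,q})\,\vert\Omega\vert \le 2\,V_c(\Omega) + d(h,\mathcal L_{\bf Z})^2 + \frac{\Vert B\Vert^2}{\lambda_{1,1}''(\Omega)} + \int_\Omega q\, v_g.
\end{equation*}

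Next I would invoke the Li-Yau bound \eqref{confsurf}, namely $V_c(M)\le 4\pi[(\gamma+3)/2]$, and use the monotonicity of conformal volume under restriction: any conformal immersion of $M$ into a sphere induces one of the subdomain $\Omega$ (after pre-composition), so $V_c(\Omega)\le V_c(M)$. This converts the first term into $8\pi[(\gamma+3)/2]$.

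The last step is the identification $\lambda_{1,1}''(\Omega)=\mu(\Omega)$ valid in dimension two. This is because on a surface the Hodge star gives an isometry between co-exact $1$-forms and exact $1$-forms (equivalently, between co-exact $1$-forms $\delta\psi$ with $\psi$ tangential and functions orthogonal to constants satisfying Neumann boundary conditions), and this isometry intertwines the Hodge-de Rham Laplacian with the scalar Laplacian; the absolute boundary condition on the $1$-form translates to the Neumann condition on the associated function. Substituting this identity into the displayed inequality yields exactly \eqref{ineqsurf}.

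I do not anticipate a genuine obstacle: the two facts used beyond Theorem \ref{volconf} — the Li-Yau conformal-volume bound and the dimension-two identification of $\lambda_{1,1}''$ with $\mu$ — are classical. The only point that deserves a careful word is the monotonicity $V_c(\Omega)\le V_c(M)$, which rests on the fact that restricting a conformal immersion of $M$ to $\Omega$ provides an admissible family for the definition of $V_c(\Omega)$.
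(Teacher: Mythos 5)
Your proposal is correct and follows essentially the same route as the paper: specialize Theorem \ref{volconf} to $n=2$, bound $V_c(\Omega)$ by the Li--Yau estimate \eqref{confsurf} for the ambient genus-$\gamma$ surface, and use the two-dimensional identification $\lambda_{1,1}''(\Omega)=\mu(\Omega)$ via the Hodge star. Your explicit remarks on the monotonicity $V_c(\Omega)\le V_c(M)$ and on the absence of the exponent $2/n$ when $n=2$ are points the paper leaves implicit, but they do not change the argument.
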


The following corollary extends Hersch's inequality
  \begin {cor} \label{cor sphere}
Let $H_{A,q} $  be a magnetic Schr\"{o}dinger operator  on a compact orientable Riemannian surface $M$ of genus zero. One has
\begin{equation}\label{ineqsphere2}
\lambda_2( H_{A,q} ) \vert M\vert\le  8\pi + \frac {\Vert B\Vert^2}{ \mu(M)}  + \int_{M}qv_g .
\end{equation}   
 \end{cor}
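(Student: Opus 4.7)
The plan is to obtain the inequality as an essentially immediate specialization of the preceding Corollary \ref{cor surf}, applied with $\Omega=M$ and genus $\gamma=0$.

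First I would observe that for $\gamma=0$ the floor-function factor in (\ref{ineqsurf}) simplifies as
\begin{equation*}
8\pi\left[\tfrac{\gamma+3}{2}\right]=8\pi\left[\tfrac{3}{2}\right]=8\pi,
\end{equation*}
which accounts for the constant $8\pi$ on the right-hand side of (\ref{ineqsphere2}). Then I would note that a compact orientable surface of genus zero is topologically $\sphere{2}$, so the first de Rham cohomology is trivial: $H^1(M,\reals)=0$. By Proposition \ref{basic facts}(3), the Hodge decomposition $A=\delta\psi+h$ then forces $h=0$ (since ${\rm Har}_1(M)\cong H^1(M,\reals)$). In particular, by the convention in the definition (\ref{ldistance}), the distance term satisfies $d(h,{\cal L}_{\bf Z})^2=0$.

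Combining these two observations in the right-hand side of (\ref{ineqsurf}) (with $\Omega=M$, so that $\mu(\Omega)=\mu(M)$ and $\vert\Omega\vert=\vert M\vert$) yields exactly the claimed bound
\begin{equation*}
\lambda_2(H_{A,q})\,\vert M\vert\le 8\pi+\frac{\Vert B\Vert^2}{\mu(M)}+\int_M q\,v_g.
\end{equation*}

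There is no genuine obstacle here: the whole content has already been packaged into Corollary \ref{cor surf} via the conformal volume estimate $V_c(M)\le 4\pi\bigl[\tfrac{\gamma+3}{2}\bigr]$ of Li-Yau \cite{LY} and Theorem \ref{volconf}. The only thing worth emphasizing in the write-up is the topological vanishing of $H^1$ in genus zero, which makes the harmonic contribution disappear and explains why the result can be stated cleanly without any reference to the lattice distance. One could, as a remark, point out that (\ref{ineqsphere2}) generalizes Hersch's classical inequality $\lambda_2(\Delta)\vert M\vert\le 8\pi$ for the round sphere, recovered by setting $A=0$ and $q=0$.
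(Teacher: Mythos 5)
Your proposal is correct and follows exactly the route the paper intends: the statement is presented as an immediate specialization of Corollary \ref{cor surf} with $\Omega=M$ and $\gamma=0$, where $\bigl[\tfrac{3}{2}\bigr]=1$ gives the constant $8\pi$ and the vanishing of $H^1(M,\reals)$ in genus zero kills the lattice-distance term. Nothing further is needed.
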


In \cite{EI1}, we have proved that if a Riemannian manifold $M$ admits an isometric immersion in a Euclidean space whose components are first eigenfunctions of the Laplacian, then \begin{equation}\label{confvol}
\left(\frac {V_c(M)}{\vert M\vert}\right)^{\frac2n}  =\frac { \mu(M)}n.
 \end{equation} 

 In particular, the equality \eqref{confvol} holds for any compact rank one symmetric space. 
Such a space is Einstein and satisfies $H^1(M,\reals)=\{0\}$.  Thus, combining with Lemma \ref{lem ric},  we get the following

\begin {cor} \label{cor CROSS}
Let $H_{A,q} $  be a magnetic Schr\"{o}dinger operator  on a domain $\Omega $ (with convex boundary if $\Omega \subsetneq M$) of a compact rank one symmetric space $M$ of (real) dimension  $n$. 
One has
\begin{equation}\label{ineqcross}
\lambda_2( H_{A,q} )\le \mu(M)\left(\frac {\vert M\vert}{\vert \Omega\vert}\right)^{\frac2n}+ \frac {1}{\vert \Omega\vert}\left(\frac {\Vert B\Vert^2}{2 c_M}  +\int_{\Omega} qv_g\right) 
\end{equation}   
where $c_M$ is the Ricci curvature constant  of $M$ and $B=dA$ is the magnetic field.
 \end{cor}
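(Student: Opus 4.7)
The plan is to deduce Corollary \ref{cor CROSS} as a direct consequence of Theorem \ref{volconf} applied to $\Omega$, combined with the sharp control of the two geometric quantities appearing in that theorem, namely the conformal volume and $\lambda_{1,1}''(\Omega)$.

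First, I would apply Theorem \ref{volconf} to the domain $\Omega$ with its induced metric, writing
\begin{equation*}
\lambda_2(H_{A,q})\le n\left(\frac{V_c(\Omega)}{\vert \Omega\vert}\right)^{2/n}+\Gamma(\Omega,A,q).
\end{equation*}
The key observation for the first term is that the conformal volume is monotone under inclusion of domains: any conformal immersion of $M$ into a sphere restricts to a conformal immersion of $\Omega$, and this restriction has smaller volume after any conformal deformation of the target, yielding $V_c(\Omega)\le V_c(M)$. Since $M$ is a compact rank one symmetric space, the identity \eqref{confvol} from \cite{EI1} gives $V_c(M)=\vert M\vert\bigl(\mu(M)/n\bigr)^{n/2}$. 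Combining these two facts produces
\begin{equation*}
n\left(\frac{V_c(\Omega)}{\vert \Omega\vert}\right)^{2/n}\le \mu(M)\left(\frac{\vert M\vert}{\vert \Omega\vert}\right)^{2/n},
\end{equation*}
which is the first term on the right-hand side of \eqref{ineqcross}.

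Next I would control $\Gamma(\Omega,A,q)$. A rank one symmetric space is Einstein with positive Ricci constant $c_M>0$, so Bochner's vanishing theorem (recalled just after Corollary \ref{cor ric}) implies $H^1(\Omega,\reals)=0$ whenever $\Omega$ is either $M$ itself or a subdomain with convex boundary, hence $d(h,\mathcal L_{\mathbf Z})=0$. For the magnetic-field term, I would invoke Lemma \ref{lem ric}, whose hypotheses are met precisely because $\mathrm{Ric}\ge c_M g$ and $\partial\Omega$ is convex (or empty). This gives $\lambda_{1,1}''(\Omega)\ge 2c_M$, and therefore
\begin{equation*}
\Gamma(\Omega,A,q)\le \frac{1}{\vert \Omega\vert}\left(\frac{\Vert B\Vert^2}{2c_M}+\int_\Omega q\,v_g\right).
\end{equation*}
Adding the two bounds yields \eqref{ineqcross}.

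There is no real obstacle here: the statement is a packaging of Theorem \ref{volconf}, Lemma \ref{lem ric}, the Bochner vanishing, and the identity \eqref{confvol} for CROSS. The only subtle point worth verifying carefully is the monotonicity $V_c(\Omega)\le V_c(M)$ under the induced conformal class on $\Omega$; everything else is a direct substitution.
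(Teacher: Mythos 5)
Your proposal is correct and follows essentially the same route as the paper: Theorem \ref{volconf} applied to $\Omega$, the identity \eqref{confvol} for compact rank one symmetric spaces, the monotonicity $V_c(\Omega)\le V_c(M)$ (which the paper uses implicitly, as in Corollary \ref{confeuc}), Bochner vanishing to remove the $d(h,{\cal L}_{\bf Z})$ term, and Lemma \ref{lem ric} to bound $\lambda_{1,1}''(\Omega)\ge 2c_M$. You are in fact more explicit than the paper, which compresses all of this into two sentences before the corollary; your version also correctly uses the exponent $2/n$ on the conformal-volume term, consistent with the proof of Theorem \ref{volconf} rather than with the typo in its statement.
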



When $M$ is a closed immersed submanifold in a Riemannian space form of curvature $\kappa = -1, \ 0, \ +1$ it was established (\cite{EI3, EI4})  the following relationship between the second eigenvalue of a scalar Schr\"{o}dinger operator $\Delta+W$ and the  $L^2$-norm mean curvature $h_M$ of $M$ : 
$$\lambda_2(\Delta +W)\le  \frac {1}{\vert M\vert}\int_M\left( n \vert h_M \vert ^2 +n\ \kappa +W\right) v_g $$ 
This inequality is known as Reilly inequality when $\kappa =0$ and $W=0$.

\smallskip
The same arguments as before enable us to obtain the following 
\begin {cor} \label{cor reilly}
Let $H_{A,q} $  be a magnetic Schr\"{o}dinger operator  on a closed immersed submanifold $M$ of a space-form of curvature $\kappa = -1, \ 0, \ +1$. 
One has
\begin{equation}\label{ineqreilly}
\lambda_2( H_{A,q} ){\vert M\vert} \le   \int_{M} \left( n \vert h_M \vert ^2 +n\ \kappa\right)v_g +\frac 1{  \lambda_{1,1}''(M)} \Vert B\Vert^2 + d(h,{\cal L}_{\bf Z})^2+ \int_{M}  qv_g.
\end{equation}   

 \end{cor}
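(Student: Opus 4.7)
The plan is to follow the template of the proof of Theorem \ref{volconf}, with two substitutions: in place of the El Soufi--Ilias conformal volume bound, we use the Reilly-type inequality
$$\lambda_2(\Delta + W) \le \frac{1}{\vert M\vert}\int_M\bigl(n\vert h_M\vert^2 + n\kappa + W\bigr)v_g$$
recalled just above from \cite{EI3, EI4}; and we handle the harmonic component of $A$ by exploiting gauge invariance through the lattice $\mathcal{L}_{\bf Z}$, exactly as in the proof of Theorem \ref{firsteigenvalue}. Since $M$ is closed, there are no boundary terms to worry about and the $L^2$-orthogonality between coexact and harmonic $1$-forms is straightforward.

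Concretely, I would proceed in the following order. First, fix any $\omega\in\mathcal{L}_{\bf Z}$ and set $\phi(x)=\int_{x_0}^{x}\omega$; the integrality of the fluxes of $\omega$ makes $e^{i\phi}$ a single-valued unimodular function on $M$, and conjugation by it is a unitary equivalence between $H_{A,q}$ and $H_{A-\omega,q}$ (this is the content of relation (\ref{gauge}) extended to integer harmonic forms, as already used in the proof of Theorem \ref{firsteigenvalue}). In particular $\lambda_2(H_{A,q})=\lambda_2(H_{A-\omega,q})$. Second, by the comparison inequality (\ref{comparaison}) applied to the gauge-equivalent potential $A-\omega$,
$$\lambda_2(H_{A,q}) \le \lambda_2\bigl(\Delta + \vert A-\omega\vert^2 + q\bigr).$$
Third, I apply the Reilly-type inequality with $W=\vert A-\omega\vert^2+q$ to obtain
$$\lambda_2(H_{A,q})\vert M\vert \le \int_M\bigl(n\vert h_M\vert^2 + n\kappa\bigr)v_g + \int_M\vert A-\omega\vert^2 v_g + \int_M q\, v_g.$$

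Fourth, writing $A-\omega=\delta\psi+(h-\omega)$ and using that $h-\omega$ is harmonic while $\delta\psi$ is coexact, these two forms are $L^2$-orthogonal on the closed manifold $M$, so
$$\int_M\vert A-\omega\vert^2 v_g = \int_M\vert\delta\psi\vert^2 v_g + \Vert h-\omega\Vert^2 \le \frac{\Vert B\Vert^2}{\lambda_{1,1}''(M)} + \Vert h-\omega\Vert^2,$$
where the last inequality is (\ref{lambdaoneone}). Taking the infimum over $\omega\in\mathcal{L}_{\bf Z}$ (the only $\omega$-dependent term is $\Vert h-\omega\Vert^2$, whose infimum is $d(h,\mathcal{L}_{\bf Z})^2$ by definition) yields exactly (\ref{ineqreilly}). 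The proof is a transparent assembly of established ingredients; no genuine obstacle arises. The one point requiring care -- and the only reason $d(h,\mathcal{L}_{\bf Z})^2$ appears in place of the weaker $\Vert h\Vert^2$ -- is the preliminary gauge-invariance step, which must be performed \emph{before} passing to the scalar comparison, since the scalar operator $\Delta+\vert A\vert^2+q$ is not itself gauge-invariant.
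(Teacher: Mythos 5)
Your proof is correct and follows essentially the same route as the paper: the comparison inequality (\ref{comparaison}), the scalar Reilly-type bound from \cite{EI3, EI4}, the $L^2$-orthogonality of $\delta\psi$ and the harmonic part, the estimate (\ref{lambdaoneone}), and the infimum over ${\cal L}_{\bf Z}$. Your one addition --- performing the gauge transformation by $e^{i\phi}$ with $\omega\in{\cal L}_{\bf Z}$ \emph{before} passing to the scalar operator --- is precisely the step the paper leaves implicit when it asserts ``we have already seen that $\frac{1}{\vert M\vert}\int_M(\vert\delta\psi+h\vert^2+q)v_g\le\Gamma(M,A,q)$'' (as literally written that bound would produce $\Vert h\Vert^2$ rather than $d(h,{\cal L}_{\bf Z})^2$), so your ordering is the right way to make the argument fully rigorous.
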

 
 \section{Upper bounds for higher order eigenvalues of $H_{A,q}$} \label{third}

In order to prove Theorem \ref{main1}, we use again the relation (\ref{comparaison})   
$$
\lambda_k(H_{A,q}) \le \lambda_k(H_{0,\vert A\vert^2 +q}).
$$

In order to prove the inequality (\ref{conformal}), we use the recent \cite{GNS} Theorem 1.1: for a scalar Schr\"{o}dinger operator $\Delta+W$ on a compact Riemannian manifold without boundary. From this result, we deduce that
$$
\lambda_k(\Delta+W)\le  \frac{1}{\vert M\vert}\int_M Wv_g+ c([g])  \left(\frac{k}{|M|}\right)^{\frac 2 n} 
$$
where $c([g])$ is a constant depending only on the conformal class $[g]$ of $g$, and the conclusion follows as before because $W=\vert A\vert^2+q$.

\medskip
In order to prove Inequality (\ref{Ricci}),  one can make use of the estimates obtained by A. Hassannezhad \cite{Ha} for a scalar Schr\"{o}dinger operator $\Delta+W$ on a compact Riemannian manifold: If $\lambda_1(\Delta+W)\ge 0$ (which is in particular the case if $W\ge 0$ as in our situation), then 
 $$
\lambda_k(\Delta+W)\le \frac{c_1}{\vert M\vert}\int_M Wv_g+ c_2 \left(\frac{V([g] )}{|M|}\right)^{\frac 2 n}  +c_3\left(\frac{k-1 }{|M|}\right)^{\frac 2 n} 
$$
 where $c_1$, $c_2$ and $ c_3$ are constants which depend only on the dimension $n$ and $V([g] )$ is the infimum of the volume of $M$ with respect to all Riemannian metrics $g_0$ conformal to $g$ and such that $\mbox{Ric}_{g_0}\ge -(n-1)g_0$. 

In particular, if $\mbox{Ric}_{g}\ge -(n-1)a^2g$ for some $a\ne 0$,  then the metric $g_0=a^2g$ satisfies $\mbox{Ric}_{g_0}\ge -(n-1)g_0$ and $\vert M\vert_{g_0}= a^n \vert M\vert_{g}$. Thus,  $V([g] )\le a^n \vert M\vert_{g}$. 

So, we can conclude by observing that
$$
\int_M W v_g=\int_M(\vert A\vert^2+q)v_g \le \Gamma(M,A,q).
$$

As a corollary, on  a compact orientable surface $M$  of genus $\gamma\ge 2$, every Riemannian metric $g$ is conformal to a hyperbolic metric $g_0$ which implies $V([g] )\le \vert M\vert_{g_0}= 4\pi(\gamma-1)$.  The same observations as before lead to the following

 \begin {cor} \label{asma}
 Let $H_{A,q} $ be a magnetic Schr\"{o}dinger operator on a compact orientable surface of genus $\gamma$, then 
$$
\lambda_k(H_{A,q} ) \vert M\vert\le  a k  +b\gamma +c\Gamma(M,A,q).
$$
  where $a$, $b$ and $ c$ are universal constants.
 
\end{cor}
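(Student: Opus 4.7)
The approach is to combine the pointwise comparison (\ref{comparaison}) with Hassannezhad's spectral bound for scalar Schr\"odinger operators, exactly as was done in the proof of Theorem \ref{main1}. By gauge invariance (Proposition \ref{basic facts}(1)) together with the unitary equivalence between $\Delta_A$ and $\Delta_{A-\omega}$ whenever $\omega\in\mathcal{L}_{\bf Z}$, we may replace $A$ by $A-\omega$ for an $\omega$ realizing $d(h,\mathcal{L}_{\bf Z})=\|h-\omega\|$ without changing $\lambda_k(H_{A,q})$. Applying (\ref{comparaison}) then gives
\begin{equation*}
\lambda_k(H_{A,q}) \le \lambda_k\bigl(\Delta + |\delta\psi + (h-\omega)|^2 + q\bigr).
\end{equation*}

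Next I would apply Hassannezhad's theorem (already invoked in the proof of Theorem \ref{main1}) to the scalar Schr\"odinger operator on the right-hand side, with potential $W=|\delta\psi+(h-\omega)|^2+q$. In dimension $n=2$ the exponent $2/n=1$ makes the $k$-dependence linear, producing a bound of the form
\begin{equation*}
\lambda_k(H_{A,q})\,|M| \le c_1 \int_M W\,v_g + c_2\, V([g]) + c_3(k-1).
\end{equation*}
Using $L^2$-orthogonality of the coexact and harmonic components of $A-\omega$ together with the variational estimate (\ref{lambdaoneone}) for $\|\delta\psi\|^2$, one obtains $\int_M W\,v_g \le |M|\,\Gamma(M,A,q)$, which produces the $\Gamma$-term in the statement.

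The main step, and the one that ties the estimate to the genus, is controlling the conformal invariant $V([g])$ defined as the infimum of volumes among metrics $g_0 \in [g]$ with $\mbox{Ric}_{g_0}\ge -g_0$. For $\gamma\ge 2$, uniformization yields a unique hyperbolic representative in $[g]$, whose area equals $4\pi(\gamma-1)$ by Gauss--Bonnet, so $V([g])\le 4\pi(\gamma-1)$. For $\gamma=0$ the standard round metric lies in every conformal class on $S^2$, so $V([g])\le 4\pi$, and for $\gamma=1$ flat metrics can be rescaled to arbitrarily small area while preserving $\mbox{Ric}\equiv 0\ge -g$, giving $V([g])=0$. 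In every case $V([g])\le b'(\gamma+1)$ for an absolute constant $b'$. Substituting and absorbing constants yields the claimed bound with universal $a,b,c$. The only mildly delicate point is the uniform treatment of the three genus regimes for $V([g])$; everything else is a direct specialization of the machinery already developed for Theorem \ref{main1} to the two-dimensional conformal setting.
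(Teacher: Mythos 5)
Your argument is correct and follows essentially the paper's own route: inequality (\ref{comparaison}) followed by Hassannezhad's bound with $n=2$ (so the $k$-dependence is linear) and the estimate $V([g])\le 4\pi(\gamma-1)$ from uniformization; you are in fact slightly more careful than the paper, which only records the hyperbolic case $\gamma\ge 2$ and leaves implicit the preliminary replacement of $A$ by $A-\omega$ that is needed to get $\int_M W\,v_g\le \vert M\vert\,\Gamma(M,A,q)$ rather than a bound involving $\Vert h\Vert^2$. The one caveat you share with the paper is that Hassannezhad's estimate is quoted under the hypothesis $\lambda_1(\Delta+W)\ge 0$ (guaranteed by $\vert A\vert^2+q\ge 0$ in Theorem \ref{main1}), a condition the corollary's statement silently omits.
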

Let us now consider a magnetic Schr\"{o}dinger operator $H_{A,q} $ on a bounded domain of an Euclidean space (here, as precised before, we consider Neumann condition on the boundary). The following estimates for the sum of eigenvalues (generalizing that of Kr\"oger for $H_{0,0}$), for the Riesz means and for the trace of the magnetic heat kernel (generalizing that of Kac for $H_{0,0}$) are consequences of the considerations above and the estimates obtained in \cite{EHIS}. For convenience and as before, we use the notation $\Gamma(\Omega,A,q):=\frac{1}{\vert \Omega\vert}\left( d(h,{\cal L}_{\bf Z})^2 +\frac{\Vert B\Vert^2}{\lambda_{1,1}''(\Omega) } + \int_{\Omega} q v_g \right)$

 \begin {thm} \label{lambdak}
 Let $H_{A,q} $ be a magnetic Schr\"{o}dinger operator on a bounded domain $\Omega$ of $\R^n$. One has
 
 \noindent (1) For all $z\in\R$,
\begin{equation}\label{Lieb-Th_gen}
 \sum_{j\ge 1} \left(z- \lambda_j(H_{A,q} ) \right)_+ \ge \frac{2\ \vert\Omega\vert}{n+2} \, {\mathcal W}_n^{-\frac n2} \left( z- \Gamma(\Omega,A,q)   \right)_+^{1+\frac n 2},
\end{equation}
where $ {\mathcal W}_n={4\pi^2}/{\omega_n^{\frac 2 n} }$ is the Weyl constant.

\smallskip
\noindent (2) For all $k\ge 1$, 
\begin{equation}\label{EHIS}
\frac 1k\sum_{j=1}^k\lambda_j(H_{A,q} )  \le \frac{n}{n+2}  {\mathcal W}_n  \left(\frac{k-1 }{|\Omega|}\right)^{\frac 2 n} 
+\Gamma(\Omega,A,q)  .
\end{equation} 
and, if $\sum_{j=1}^k\lambda_j(\Delta+q )\ge 0$, 
\begin{equation}\label{EHIS1}
\lambda_k(H_{A,q} )  \le \max\left( 2\left({n+2}\right)^{\frac 2 n}   {\mathcal W}_n \left(\frac{k-1 }{|\Omega|}\right)^{\frac 2 n} 
\ , \  2\Gamma(\Omega,A,q)  \right).
\end{equation} 

\noindent (3) For all $t>0$,
\begin{equation}\label{heat_gen}
 \sum_{j\ge1} e^{- t\lambda_j(H_{A,q}) } \ge  \frac {\vert\Omega\vert}{\left(4\pi t\right)^{\frac n2} }e^{- t\Gamma(\Omega,A,q)}.   
\end{equation}


\end{thm}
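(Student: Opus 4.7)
\textbf{Proof plan for Theorem \ref{lambdak}.} The strategy is to reduce each of the three inequalities to the analogous known statement for a scalar Schrödinger operator $\Delta+W$ on $\Omega$, then control $W$ by $\Gamma(\Omega,A,q)$ using the Hodge decomposition and gauge invariance. Concretely, I will exploit the comparison (\ref{comparaison}): $\lambda_k(H_{A,q})\le\lambda_k(\Delta+|A|^2+q)$, combined with the fact that shifting $A$ by an arbitrary $\omega\in\mathcal{L}_{\mathbf Z}$ leaves the spectrum of $H_{A,q}$ unchanged. Indeed, although $\omega$ is generally not exact, the assumption $\omega\in\mathcal{L}_{\mathbf Z}$ guarantees (exactly as in the definition of $u=e^{i\phi}$ in the proof of Theorem \ref{firsteigenvalue}) that $\phi(x)=\int_{x_0}^x\omega$ is well-defined modulo $2\pi\mathbf{Z}$, so $e^{i\phi}$ is a smooth unitary function on $\Omega$; by the gauge formula (\ref{gauge}), $\Delta_A$ and $\Delta_{A-\omega}$ are unitarily equivalent and hence have the same spectrum. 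Therefore, for every $\omega\in\mathcal{L}_{\mathbf Z}$,
$$
\lambda_k(H_{A,q})=\lambda_k(H_{A-\omega,q})\le \lambda_k\!\left(\Delta+|A-\omega|^2+q\right).
$$

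Next, I will bound the mean value of $W_\omega:=|A-\omega|^2+q$. Writing $A=\delta\psi+h$ and using that $\delta\psi$ is co-exact while $h-\omega$ is harmonic and tangential, the two summands are $L^2$-orthogonal, so
$$
\int_\Omega |A-\omega|^2\,v_g=\int_\Omega|\delta\psi|^2\,v_g+\|h-\omega\|^2\le \frac{\|B\|^2}{\lambda_{1,1}''(\Omega)}+\|h-\omega\|^2,
$$
where the last inequality is (\ref{lambdaoneone}). Taking the infimum over $\omega\in\mathcal{L}_{\mathbf Z}$ and adding $\int_\Omega q\,v_g$ yields $\frac{1}{|\Omega|}\int_\Omega W_\omega\,v_g\le \Gamma(\Omega,A,q)$ for an optimal choice of $\omega$, which is the universal substitute for $\overline{W}$ in the sequel.

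With $W=W_\omega$ fixed at this optimal value, each of (\ref{Lieb-Th_gen})--(\ref{heat_gen}) follows by invoking the corresponding bound of \cite{EHIS} for $\Delta+W$ on a bounded Euclidean domain with Neumann conditions, and then transferring the inequality to $H_{A,q}$ through the monotonicity of the relevant functional in the eigenvalues. For (1), the Lieb--Thirring style inequality $\sum_j(z-\lambda_j(\Delta+W))_+\ge\frac{2|\Omega|}{n+2}\mathcal{W}_n^{-n/2}(z-\overline{W})_+^{1+n/2}$ of \cite{EHIS} transfers because $(z-\lambda)_+$ is nonincreasing in $\lambda$, and $\lambda_j(H_{A,q})\le\lambda_j(\Delta+W)$ gives a termwise increase of the sum. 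For (2), Kröger/Berezin--Li--Yau type bounds on $\frac{1}{k}\sum_{j=1}^k\lambda_j(\Delta+W)$ from \cite{EHIS} transfer directly since $\lambda_j(H_{A,q})\le\lambda_j(\Delta+W)$; the second half (\ref{EHIS1}) is then a standard consequence of (\ref{EHIS}), using the hypothesis $\sum_{j=1}^k\lambda_j(\Delta+q)\ge 0$ (which passes through since $|A-\omega|^2\ge 0$). For (3), the heat trace bound from \cite{EHIS} transfers because $\lambda\mapsto e^{-t\lambda}$ is decreasing, so the inequality $\lambda_j(H_{A,q})\le\lambda_j(\Delta+W)$ turns into $e^{-t\lambda_j(H_{A,q})}\ge e^{-t\lambda_j(\Delta+W)}$.

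The only nonroutine point is the gauge-invariance step that allows replacing $A$ by $A-\omega$ with $\omega$ harmonic rather than exact; once this is justified by the integrality of the fluxes of $\omega$ (so that $e^{i\phi}$ is globally defined), the remainder is a straightforward application of the scalar estimates of \cite{EHIS} combined with the averaged Hodge-orthogonality estimate already exploited in the proof of Theorem \ref{firsteigenvalue}.
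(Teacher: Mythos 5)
Your proposal is correct and follows essentially the same route as the paper: reduce to the scalar operator $\Delta+W$ via (\ref{comparaison}), invoke the four estimates of \cite{EHIS} for bounded Euclidean domains, and bound the mean value of $W$ by $\Gamma(\Omega,A,q)$. If anything, you are more careful than the paper at one point: the paper takes $W=\vert\delta\psi+h\vert^2+q$ and asserts $\frac{1}{\vert\Omega\vert}\int_\Omega W\le\Gamma(\Omega,A,q)$ ``using the same arguments as before,'' whereas you make explicit the gauge shift $A\mapsto A-\omega$ by an integral harmonic form $\omega\in{\cal L}_{\bf Z}$ (justified by the global definition of $e^{i\phi}$) that is actually needed to replace $\norm{h}^2$ by $d(h,{\cal L}_{\bf Z})^2$ when $\Omega$ is not simply connected.
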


\begin{proof}
Taking $A=\delta\psi+h$.  As we have seen before, 

\begin{equation}\label{comparison1}
\lambda_k(H_{A,q} )\le \lambda_k(\Delta +\vert\delta\psi+h\vert^2 +q ).
\end{equation} 
 In \cite{EHIS}, the authors obtained estimates for the eigenvalues, their Riesz means, their sum  and the  heat trace of a general elliptic operator. For a scalar Schr\"{o}dinger operator $\Delta+W$ on a bounded Euclidean domain $\Omega\subset \R^n$, these estimates take the following form :

\smallskip

\noindent (1) For all $z\in\R$,
\begin{equation}\label{schrod1}
 \sum_{j\ge 1} \left(z- \lambda_j(\Delta +W) \right)_+ \ge  \frac{2\ \vert\Omega\vert}{n+2} \, {\mathcal W}_n^{-\frac n2}  \left( z- \frac1{\vert\Omega\vert}\int_\Omega W dx    \right)_+^{1+\frac n 2},
\end{equation}

\noindent (2) For all $k\ge 1$, 
\begin{equation}\label{schrod2}
\frac 1k\sum_{j=1}^k\lambda_j(\Delta +W)  \le  \frac{n}{n+2}  {\mathcal W}_n  \left(\frac{k-1 }{|\Omega|}\right)^{\frac 2 n} +\frac1{\vert\Omega\vert}\int_\Omega  W dx.  
\end{equation} 
and if $\sum_{j=1}^k\lambda_j(\Delta +W) \ge 0$, then
\begin{equation}\label{schrod3}
\lambda_k({\Delta +W} )  \le \max\left( 2\left({n+2}\right)^{\frac 2 n}   {\mathcal W}_n \left(\frac{k-1 }{|\Omega|}\right)^{\frac 2 n} 
\ , \  \frac2{\vert\Omega\vert}\int_\Omega  W dx\right).
\end{equation} 

\noindent (3) For all $t>0$,
\begin{equation}\label{schrod4}
 \sum_{j\ge1} e^{- t\lambda_j(\Delta +W) } \ge  \frac {\vert\Omega\vert}{\left(4\pi t\right)^{\frac n2} }e^{- \frac t{\vert\Omega\vert}\int_\Omega  Wdx}.   
\end{equation}

To conclude the proof, we simply apply these inequalities to the Schr\"{o}dinger operator $\Delta + W$ with $W=\vert\delta\psi+h\vert^2 +q $ and observe that,  using the same arguments as before, 
$$\frac{1}{\vert\Omega\vert}\int_\Omega  W dx \leq  \Gamma(\Omega,A,q) .$$
 
\end{proof}

 Estimates such as \eqref{schrod1} $\dots$  \eqref{schrod4} are also available in   \cite{EHIS} for a bounded domain $\Omega$ of a Riemannian manifold $M$. However, in this case the constants which involve the geometry of $\Omega$ are less explicit than in the Euclidean case. Therefore, we can deduce that there exist constants $c_1(\Omega), \cdots, c_4(\Omega)$, depending only on $\Omega$ such that,  
for all $z\in\R$, $k\ge 1$ and $t>0$
\begin{equation}\label{Lieb-Th_genriem}
 \sum_{j\ge 1} \left(z- \lambda_j(H_{A,q} ) \right)_+ \ge c_1(\Omega) \left( z- \Lambda(A,q)    \right)_+^{1+\frac n 2},
\end{equation}

\begin{equation}\label{EHISriem}
\frac 1k\sum_{j=1}^k\lambda_j(H_{A,q} )  \le c_2(\Omega)  \left(\frac{k-1 }{|\Omega|}\right)^{\frac 2 n} 
+ \Gamma(\Omega,A,q) ,
\end{equation} 
anf if $\sum_{j=1}^k\lambda_j(\Delta+{q} ) \ge 0$, then
\begin{equation}\label{EHIS1riem}
\lambda_k(H_{A,q} )  \le \max\left( c_3(\Omega) \left(\frac{k-1 }{|\Omega|}\right)^{\frac 2 n} 
\ , \  2 \Gamma(\Omega,A,q)  \right),
\end{equation}

\begin{equation}\label{heat_genriem}
 \sum_{j\ge1} e^{- t\lambda_j(H_{A,q}) } \ge  \frac {c_4(\Omega)}{t^{\frac n2} }e^{- t \Gamma(\Omega,A,q)}.   
\end{equation}

\addcontentsline{toc}{chapter}{Bibliography}
\bibliographystyle{plain}
\bibliography{CEIS11.bbl}

\bigskip

\normalsize 
\noindent Bruno Colbois

\noindent Universit\'e de Neuch\^atel, Institut de Math\'ematiques \\
Rue Emile Argand 11\\
 CH-2000, Neuch\^atel, Suisse

\noindent bruno.colbois@unine.ch

\bigskip
\noindent
Ahmad El Soufi and Sa\"{\i}d Ilias

\noindent
 Universit\'e Fran{\c c}ois Rabelais de Tours, Laboratoire de Math\'ematiques
et Physique Th\'eorique\\
UMR-CNRS 6083, Parc de Grandmont, 37200
Tours, France

\noindent  
ilias@univ-tours.fr

\bigskip

\noindent Alessandro Savo

\noindent Dipartimento SBAI, Sezione di Matematica \\
Sapienza Universit\`a di Roma,  
Via Antonio Scarpa 16\\
00161 Roma, Italy

\noindent alessandro.savo@uniroma1.it

\end{document}